\theoremstyle{plain}
\newtheorem{theorem}{Theorem}[section]
\newtheorem{corollary}[theorem]{Corollary}
\newtheorem{lemma}[theorem]{Lemma}
\newtheorem{proposition}[theorem]{Proposition}
\theoremstyle{definition}
\newtheorem{definition}[theorem]{Definition}
\theoremstyle{remark}
\newtheorem{remark}[theorem]{Remark}
\newcommand{\ZZ}{\mathbb{Z}}
\def\modd#1 #2{#1\ \mbox{\rm (mod}\ #2\mbox{\rm )}}
\title{The $p$-Adic Valuation Trees for Quadratic Polynomials for Odd Primes}
\author{Will Boultinghouse \\
Kentucky Wesleyan College \\
Division of Natural Sciences and Mathematics \\
3000 Frederica Street \\
Owensboro, KY 42301 \\
USA
\and
Emily Hammett \\
Rowan University \\
Department of Mathematics \\
201 Mullica Hill Road \\
Glassboro, NJ 08028 \\
USA
\and
Stephen Hu \\
Rutgers University \\
Department of Mathematics \\
Hill Center for the Mathematical Sciences\\ 
110 Frelinghuysen Road \\
Piscataway, NJ 08854 \\ 
USA
\and
Olena Kozhushkina\footnote{Corresponding author.} \\
Ursinus College \\
Department of Mathematics and Computer Science \\
Pfahler Hall 101 \\
Collegeville, PA 19426 \\
USA\\
\href{mailto:okozhushkina@ursinus.edu}{\tt okozhushkina@ursinus.edu}\\
\and
Rachel Snyder \\
Western Washington University \\
Department of Mathematics \\
516 High Street \\
Bellingham, WA 98225 \\
USA
\and
Justin Trulen \\
Kentucky Wesleyan College \\
Division of Natural Sciences and Mathematics \\
3000 Frederica Street \\
Owensboro, KY 42301 \\
USA
}
\date{}
\begin{document}

\maketitle

\begin{abstract}
    We examine the behavior of the sequences of $p$-adic valuations of quadratic polynomials with integer coefficients for an odd prime $p$ through tree representations. 
    Under this representation, a finite tree corresponds to a periodic sequence, and an infinite tree corresponds to an unbounded sequence.
    We use the polynomial coefficients to determine whether the $p$-adic valuation trees are finite or infinite, the number of infinite branches, the number of levels, the valuations at terminating nodes, and their relationship to the corresponding sequences.
\end{abstract}

{\bf Keywords:} valuations, polynomials, $p$-adic integers, valuation trees, Hensel's lemma.

\section{Introduction}
Let $p$ be an odd prime. The $p$-adic valuation of a natural number $n$ is defined as the highest power of $p$ that divides $n$. If we write $n=p^{k}d$, where $k\in\mathbb{N}=\left\{0,1,2,\ldots\right\}$ and $d$ is an integer not divisible by $p$, then the $p$-adic valuation is defined as $\nu_{p}(n):=k$. A natural question to investigate is to classify the behavior of the sequence $(\nu_p(x_n))$, given some prime $p$ and an integer sequence $(x_n)$. For instance, Legendre's formula \cite{Legendre} gives a closed form for the sequence $(\nu_p(n!))$:
\begin{equation*}
\nu_{p}(n!) = \frac{n - s_{p}(n)}{p-1},
\end{equation*}
\noindent
where $s_{p}(n)$ denotes the sum of the base-$p$ digits of $n$. 
Medina et al.\ \cite{Medina} considered sequences of $p$-adic valuations generated by polynomials; Bell \cite{Bell} investigated sequences generated by polynomials with rational coefficients to study $p$-regular sequences; Byrnes et al. \cite{Byrnes} considered sequences of $p$-adic valuations for polynomials of the form $f(n)=an^2+c$; Boultinghouse et al. \cite{Boultinghouse} examined the sequences of 2-adic valuations generated by quadratic polynomials with integer coefficients. In this paper, we study the $p$-adic valuation of sequences generated by quadratic polynomials of the form $f(n)=an^2+bn+c$, where $a,b,c\in\mathbb{Z}$ and $a\neq0$, for an \textit{odd} prime $p$.
If $p\mid\gcd(a,b,c)$, then define $k=\min\{\nu_{p}(a),\nu_{p}(b),\nu_{p}(c)\}$. It suffices to consider the corresponding function $f(n)p^{-k}$ since the valuations for $f(n)$ are those for $f(n)p^{-k}$ increased by the value of $k$. Therefore, unless otherwise specified, we assume $p\nmid\gcd(a,b,c)$ throughout.

We construct tree diagrams for each sequence to visualize these sequences. The top node of the tree represents the valuation of $f(n)$ for any $n\in \mathbb{N}$. If $\nu_p(f(n))$ does not depend on $n$, we stop the construction and label the node with its $p$-adic valuation: the sequence $(\nu_p(f(n)))$ is constant. Otherwise, the top node splits into $p$ branches, each branch corresponding to a subsequence $pq,pq+1,pq+2,\ldots,pq+p-1$ for some $q \in \mathbb{N}$. We then repeat the evaluation process for each node to construct the tree, splitting when necessary. See Figure\ \ref{Example:FirstFiniteTree} for example.

\begin{figure}[H]
 \begin{center}
        \begin{tikzpicture}[level distance=50pt, sibling distance=8pt]
		\Tree [.$q$ 
			\edge node[auto=right]{$3q$}; 
			[.$\nu_3\geq 1$
				\edge node[auto=right,yshift=-3]{$9q$}; $\boxed{\nu_3=3}$
				\edge node[auto=right,rotate=90,yshift=5,xshift=10,scale=0.75]{$9q+3$}; $\boxed{\nu_3=2}$
				\edge node[auto=left,yshift=-3]{$9q+6$}; $\boxed{\nu_3=2}$
			]
			\edge node[auto=right,xshift=3]{$3q+1$}; $\boxed{\nu_3=0}$
			\edge node[auto=left]{$3q+2$}; $\boxed{\nu_3=0}$
		]
	    \end{tikzpicture}
    \end{center} 
    \caption{a $3$-adic valuation tree of $f(n)=n^2+27$} 
    \label{Example:FirstFiniteTree}
\end{figure}
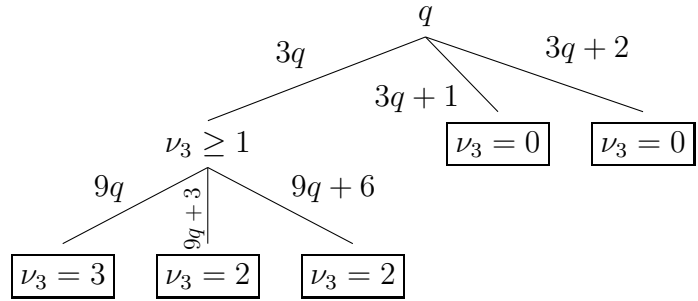

We call any node with no variation \textit{terminating} since it does not split. Otherwise, we call the node \textit{non-terminating}. 

As mentioned above, we represent the terms of the subsequence as nodes. For each $m \in \mathbb{N}$, if $n = p^m q + r$ is a non-terminating node, then the successive branches are $p^m (pq) + r, p^m (pq+1) + r, \ldots, p^m (pq + p-1) + r$, and they are arranged from left to right in the order presented.

Define the \textit{$m^{th}$ level of a tree} as the nodes that can be written in the form $n=p^{m}q+r$, where $r\in\left\{0,1,2,\ldots,p^{m}-1\right\}$. We call a tree \textit{finite} if there exists a level that contains only terminating nodes. Otherwise, we refer to such a tree as \textit{infinite}. In the case of a finite tree, we define the \textit{number of levels} as the smallest $m$ such that the valuation of $f(p^{m}q+r)$ is constant for all $r$ with respect to $q\in\mathbb{N}$.
For example, the tree in Figure\ \ref{Example:FirstFiniteTree} is a finite tree with two levels.

Finite trees can also be viewed as $p$-\textit{automatic sequences} (see Definition 1.1 in\ \cite{AlloucheShallit92}). The set of subsequences $(\nu_p\circ f (p^m q+r))$, where $m\geq 0$ and $0\leq r < p^m$, is called the $p$-\textit{kernel} of the sequence $(\nu_p\circ f (n))$. We refer the reader to Allouche and Shallit's book\ \cite{AlloucheShallit} and Bell's paper\ \cite{Bell} for more information.

In infinite trees, an \textit{infinite branch} refers to a path that travels through non-terminating nodes and branches. An infinite tree contains at least one infinite branch. See Figures\ \ref{Example:OneInfinteBranch} and\ \ref{Example:TwoInfinteBranch} in the Appendix for examples of infinite trees.
  
An infinite branch in the tree provides some key insight into the zeros of the polynomial $f$: any infinite branch in the tree associated to the polynomial $f$ corresponds to a root of $f(x) = 0$ in the $p$-adic ring of integers $\mathbb{Z}_p$. This result can be attributed to Byrnes et al.\ \cite{Byrnes}.
We should point out that the case of a tree being finite corresponds to the sequence of $p$-adic valuations being both bounded and periodic. If a tree contains at least one infinite branch, then the sequence of $p$-adic valuations is unbounded. Lastly, we discuss the shape of the trees -- this pertains to the tree structure: which branches terminate, at what level they terminate, at what level the branches split, and the valuations at the terminating nodes.

The following theorem provides the main results of the paper.
\begin{theorem}\label{theorem:summary}
Let $p$ be an odd prime and let $f (n) = a n^{2} + b n + c \in \mathbb{Z} [n]$ such that $p \nmid \gcd(a, b, c)$.
\begin{enumerate}
    \item If $a \equiv 0 \pmod p$ and $b \equiv 0 \pmod p$, then the tree has one node with valuation $\nu_{p} (f (n)) = 0$.
    \item If $a \equiv 0 \pmod p$ and $b \not\equiv 0 \pmod p$, then the tree has one infinite branch.
    \item If $a \not\equiv 0 \pmod p$, then we consider the discriminant $D = b^{2} - 4 a c$.
    \begin{enumerate}
        \item If $D = 0$, then the tree has one infinite branch.
        \item If $D = p^{\nu_{p} (D)} \Delta$ where $\nu_{p} (D)$ is even and $\Delta$ is a quadratic residue modulo $p$, then the tree has two infinite branches.
        \item If $D = p^{\nu_{p} (D)} \Delta$ where $\nu_{p} (D)$ is odd or $\Delta$ is a quadratic non-residue modulo $p$, then the tree is finite with $\ell = \lceil \nu_{p} (D) / 2 \rceil$ levels.
    \end{enumerate}
\end{enumerate}
\end{theorem}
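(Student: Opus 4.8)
The unifying device is completing the square. Since $p$ is odd and $p\nmid a$ in Case 3, the factor $4a$ is a unit in $\mathbb{Z}_p$, and the identity
$$4a\,f(n) = (2an+b)^2 - D$$
yields $\nu_p(f(n)) = \nu_p\!\big((2an+b)^2 - D\big)$. Because $2a$ is a unit modulo every power of $p$, the affine substitution $y = 2an+b$ is a bijection on $\mathbb{Z}/p^m\mathbb{Z}$ for each $m$, so it carries the level structure of the tree of $f$ onto that of the map $y\mapsto \nu_p(y^2-D)$; the two trees are identical. The entire problem therefore reduces to understanding how large $\nu_p(y^2-D)$ can be, i.e.\ to the solvability of $y^2\equiv D$ in $\mathbb{Z}_p$.

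First I would dispose of the degenerate cases. In Case 1, the hypothesis $p\nmid\gcd(a,b,c)$ forces $p\nmid c$, so $f(n)\equiv c\not\equiv 0\pmod p$ and $\nu_p(f(n))=0$ identically, giving a single terminating node. In Case 2, $f'(n)=2an+b\equiv b\not\equiv 0\pmod p$, so the unique root of $f$ modulo $p$ is simple and lifts, by Hensel's lemma, to a unique root in $\mathbb{Z}_p$ (the second root of the quadratic leaves $\mathbb{Z}_p$ because $p\mid a$); by the correspondence of Byrnes et al.\ cited above, one $p$-adic root produces exactly one infinite branch.

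For Case 3 I would invoke the standard classification of squares in $\mathbb{Z}_p$ for odd $p$: writing $D=p^{\nu_p(D)}\Delta$ with $p\nmid\Delta$, the element $D$ is a nonzero square in $\mathbb{Z}_p$ exactly when $\nu_p(D)$ is even and $\Delta$ is a quadratic residue modulo $p$ (Hensel's lemma lifts a square root of $\Delta$). Hence, if $D=0$ (Case 3a) then $2an+b=0$ has the single solution $n=-b/(2a)\in\mathbb{Z}_p$, one infinite branch; if $\nu_p(D)$ is even and $\Delta$ is a residue (Case 3b) then $\pm\sqrt{D}$ are two distinct roots, producing $n=(-b\pm\sqrt{D})/(2a)\in\mathbb{Z}_p$ and two infinite branches; otherwise (Case 3c) $y^2=D$ is unsolvable in $\mathbb{Z}_p$, so $\nu_p(f(n))$ is bounded and the tree is finite.

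The main work, and the step I expect to be the obstacle, is the level count $\ell=\lceil\nu_p(D)/2\rceil$ in Case 3c. Setting $v=\nu_p(D)$ and $t=\nu_p(y)$, I would establish directly that $\nu_p(y^2-D)=2t$ when $2t<v$, that it equals $v$ when $2t>v$, and that in the boundary case $2t=v$ (possible only for even $v$) it again equals $v$, since $\Delta$ being a non-residue forces $u^2\not\equiv\Delta\pmod p$ when $y=p^{v/2}u$. In every subcase the valuation is determined once $y$ is known modulo $p^{\lceil v/2\rceil}$, so each branch terminates by level $\lceil v/2\rceil$; conversely the node on which $y\equiv 0\pmod{p^{\lceil v/2\rceil-1}}$ still splits, because fixing the next digit of $y$ changes the valuation. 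Tracking these terminating levels simultaneously records the valuations at the leaves. The delicate points are handling the parity of $v$ uniformly and confirming that the boundary case $2t=v$ cannot induce further splitting when $\Delta$ is a non-residue, which is precisely where the residue/non-residue hypothesis of Case 3c is used.
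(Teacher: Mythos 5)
Your proposal is correct, and for the main case it takes a genuinely different route from the paper. Cases 1, 2, 3(a), 3(b) proceed essentially as in the paper: the paper also reduces Case 1 to $f(r)\equiv c\pmod p$, also uses Hensel's lemma for Case 2 (with the explicit seed $x_0=-cb^{p-2}$ and Fermat's little theorem rather than your abstract ``simple root mod $p$''), and also shows the second root escapes $\mathbb{Z}_p$ — there by computing $\nu_p\bigl((-b-\sqrt{D})/(2a)\bigr)=-\nu_p(a)<0$ directly, where you appeal to $p\mid a$; your version would benefit from the one-line justification (e.g.\ the sum of the roots is $-b/a$, of negative valuation, so both roots cannot lie in $\mathbb{Z}_p$). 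The real divergence is Case 3(c). The paper truncates the $p$-adic expansion of $b/(2a)$ to an integer $S_\ell$, rewrites $f(n-S_\ell)=an^2+p^\ell k_1 n+p^{\nu_p(D)}k_2$, and expands $f(p^m j+p^{m-1}r-S_\ell)$ level by level, reading off the minimal power of $p$ in each regime; a separate translation lemma then transports the structure back to $f$. You instead complete the square, $4af(n)=(2an+b)^2-D$, and reduce everything to the elementary computation of $\nu_p(y^2-D)$ as a function of $t=\nu_p(y)$: it equals $2t$ for $2t<\nu_p(D)$ and equals $\nu_p(D)$ otherwise, the boundary case $2t=\nu_p(D)$ being exactly where the non-residue hypothesis enters — the same place the paper uses it, in its subcase ``$\nu_p(D)$ even, $r=0$, level $\ell$''. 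Your approach buys a uniform treatment of all levels at once, makes the leaf valuations (the content of the paper's Theorem 1.2) fall out of the same computation rather than requiring the separate translation corollary, and isolates the arithmetic input cleanly; the one point to make explicit is that the unit affine substitution $y=2an+b$ really does carry nodes to nodes and terminating nodes to terminating nodes, which follows here because your formula shows $\nu_p(y^2-D)$ depends only on $y\bmod p^{\ell}$ while $2a$ is invertible modulo every power of $p$. The paper's $S_\ell$ device is essentially a hand-rolled version of the same normalization, so the two arguments are close in spirit but yours is the more economical decomposition.
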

 The result above will be proven by extending the ideas from Boultinghouse et al.\ \cite{Boultinghouse} by using Hensel's lemma and the theory of quadratic residues.   

 The following theorem describes the exact structure of all finite trees from part 3(c) of Theorem\ \ref{theorem:summary} regarding the valuations at each terminating node. 
 \begin{theorem}\label{theorem:FiniteTrees}
Let $p$ be an odd prime and let $f(n)=an^{2}+bn+c \in \mathbb{Z} [n]$ such that $\nu_{p}(a)=0$. Let $D=b^{2}-4ac\neq0$ denote the discriminant. Suppose the discriminant can be expressed as $D=p^{\nu_{p}(D)}\Delta$ so that $p \nmid \Delta$, and the valuation $\nu_{p}(D)$ is odd or $\Delta$ is a quadratic non-residue modulo $p$. Set $\ell=\lceil\nu_{p}(D)/2\rceil$. Then there exists a natural number $S_{\ell}$ with the property
\begin{equation*}
    \nu_{p}\left(\frac{b}{2a}-S_{\ell}\right)\geq\ell,
\end{equation*} 
such that
\begin{equation*}
    \nu_{p}(f(n))=\begin{cases}
    2(m-1),&\ \text{if}\ n\equiv\modd{(p^{m-1}r-S_{\ell})} {p^{m}}\ \text{for}\ m=1,2,\ldots,\ell;\\
    \nu_{p}(D),&\ \text{if}\ n\equiv\modd{-S_{\ell}} {p^{\ell}},\\
    \end{cases}
\end{equation*}
where $r\in\left\{1,2,\ldots,p-1\right\}$.
\end{theorem}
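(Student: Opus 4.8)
The plan is to reduce the whole statement to a single $p$-adic valuation computation via completing the square. Since $p$ is odd and $\nu_p(a)=0$, the integer $4a$ is a $p$-adic unit, so $\nu_p(f(n))=\nu_p\big(4a\,f(n)\big)=\nu_p\big((2an+b)^2-D\big)$. Writing $w=2an+b=2a\big(n+\tfrac{b}{2a}\big)$ and noting that $\tfrac{b}{2a}\in\ZZ_p$ (again because $2a$ is a unit), I would take $S_\ell$ to be a natural-number representative of the residue of $\tfrac{b}{2a}$ modulo $p^\ell$; this is precisely a Hensel-type approximation to the vertex $-\tfrac{b}{2a}$, and by construction it satisfies $\nu_p\big(\tfrac{b}{2a}-S_\ell\big)\geq\ell$. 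The argument then rests on the elementary identity $n+\tfrac{b}{2a}=(n+S_\ell)+\big(\tfrac{b}{2a}-S_\ell\big)$, from which the ``distance from the vertex'' $\nu_p(w)=\nu_p\big(n+\tfrac{b}{2a}\big)$ equals $\nu_p(n+S_\ell)$ whenever the latter is strictly less than $\ell$, and is at least $\ell$ otherwise.

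Next I would set $j=\nu_p(n+S_\ell)$ and translate the two congruence conditions in the statement into constraints on $j$. The condition $n\equiv p^{m-1}r-S_\ell\pmod{p^m}$ with $r\in\{1,\dots,p-1\}$ is exactly $\nu_p(n+S_\ell)=m-1$, while $n\equiv -S_\ell\pmod{p^\ell}$ is exactly $j\geq\ell$. As $m$ ranges over $1,\dots,\ell$ these cases partition $\NN$, so it suffices to evaluate $\nu_p(w^2-D)$ in the two regimes $j<\ell$ and $j\geq\ell$ and to match the answer to the stated value.

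For $j<\ell$ the computation is immediate: $\nu_p(w^2)=2j$, and the relation $\ell=\lceil\nu_p(D)/2\rceil$ forces $2j\leq 2(\ell-1)<\nu_p(D)$ whatever the parity of $\nu_p(D)$, so the two terms of $w^2-D$ have distinct valuations and $\nu_p(f(n))=2j=2(m-1)$ with $m=j+1$. The substance of the theorem lies in the regime $j\geq\ell$, which is where the hypothesis of part 3(c) enters, and I expect this to be the main obstacle. Here $\nu_p(w)\geq\ell$ gives $\nu_p(w^2)\geq 2\ell\geq\nu_p(D)$, and I must rule out the cancellation that would push $\nu_p(w^2-D)$ strictly above $\nu_p(D)$---precisely the phenomenon that produces an infinite branch in part 3(b). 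If $\nu_p(D)$ is odd, then $2\ell=\nu_p(D)+1>\nu_p(D)$, so $\nu_p(w^2)>\nu_p(D)$ and hence $\nu_p(f(n))=\nu_p(D)$ with no further work. If $\nu_p(D)$ is even, the only delicate case is $\nu_p(w)=\ell$ exactly; writing $w=p^\ell u$ with $p\nmid u$ gives $w^2-D=p^{\nu_p(D)}(u^2-\Delta)$, and since $\Delta$ is a quadratic non-residue modulo $p$ while $u^2$ is a square, $u^2\not\equiv\Delta\pmod p$, so $\nu_p(w^2-D)=\nu_p(D)$. This final step is the crux: it is exactly where the theory of quadratic residues guarantees that $D$ is not a square in $\ZZ_p$, forcing the branch to terminate with valuation $\nu_p(D)$ rather than continue indefinitely.
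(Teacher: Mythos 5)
Your proposal is correct, and it reaches the conclusion by a more direct route than the paper. The paper first defines $S_\ell$ as the truncation of the $p$-adic expansion of $b/(2a)$ (exactly your choice), then explicitly computes the translated polynomial $f(n-S_\ell)=an^2+p^{\ell}k_1 n+p^{\nu_p(D)}k_2$, carries out a level-by-level analysis of the tree of $f(n-S_\ell)$ at nodes $n=p^m j+p^{m-1}r$, and finally transports the result back to $f$ via a separate translation lemma; the completion of the square appears only at the very last step, to show $\nu_p(aj^2+k_1j+k_2)=\nu_p\big((2aj+k_1)^2-\Delta\big)=0$ when $\nu_p(D)$ is even. You instead complete the square globally at the outset, reducing everything to the single identity $\nu_p(f(n))=\nu_p\big((2an+b)^2-D\big)$ and a trichotomy on $j=\nu_p(n+S_\ell)$, with the ultrametric inequality handling $j<\ell$ and the quadratic non-residue hypothesis handling the delicate boundary case $\nu_p(w)=\ell$ with $\nu_p(D)$ even. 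The crux is identical in both arguments (a unit square cannot be congruent to a non-residue modulo $p$, so no cancellation occurs at the bottom level), but your version dispenses with the translation lemma and the explicit description of the translated tree; what the paper's longer route buys is the structural statement of Proposition 4.1 about which nodes of $f(n-S_\ell)$ terminate at each level, which supports the tree-diagram narrative and the appendix examples. Your translation of the congruence classes into the conditions $\nu_p(n+S_\ell)=m-1$ and $\nu_p(n+S_\ell)\geq\ell$ is accurate, and these cases do partition $\NN$, so the case analysis is complete.
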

The theorem above will be proven using the translation arguments for the structures of finite trees developed in Boultinghouse et al.\ \cite{Boultinghouse}. 

Note that this theorem can be stated in the context of the tree associated with the function $f(n)$. Consider the assumptions of Theorem\ \ref{theorem:FiniteTrees}. Let $r\in\left\{1,2,\ldots,p-1\right\}$. At a given level $m$, where $1\leq m<\ell$, the tree has $p-1$ terminating nodes at the nodes $n\equiv\modd{(p^{m-1}r-S_{\ell})} {p^{m}}$ with valuation $\nu_{p}(f(n))=2(m-1)$. At the $\ell^{th}$ level, the nodes are terminating. Additionally, each node of the form $n\equiv\modd{(p^{\ell-1}r-S_{\ell})} {p^{\ell}}$ has a valuation of $\nu_{p}(f(n))=2(\ell-1)$ and the node of the form $n\equiv\modd{-S_{\ell}} {p^{m}}$ has a valuation $\nu_{p}(f(n))=\nu_{p}(D)$. See Figure\ \ref{Example:FiniteTree} in the Appendix for such an example.

\section{The \texorpdfstring{$p$}{p}-adic numbers and other preliminaries}

The $p$-adic numbers $\mathbb{Q}_{p}$ are all elements expressible as $x = \sum_{i = m}^{\infty} a_{i} p^{i}$ where $m \in \mathbb{Z} \cup \{ \infty \}$, $a_{i} \in \{ 0, 1, \ldots, p - 1 \}$, and $a_{m} \neq 0$.

With the $p$-adic expansion of $x$ we have the equivalent definition of the $p$-adic valuation as
\begin{equation*}
\nu_{p}(x)=\nu_{p}\left( \sum_{i = m}^{\infty} a_{i} p^{i}\right)=m.
\end{equation*}
By definition we let $\nu_{p}(0)=\infty$. Additionally, for $x,y\in\mathbb{Q}_{p}$ we have the identity
\begin{equation*}
\nu_{p}(xy)=\nu_{p}(x)+\nu_{p}(y).
\end{equation*}

A space of interest will be the ring of $p$-adic integers, denoted as $\mathbb{Z}_{p}$:
\begin{equation*}
\mathbb{Z}_{p}=\left\{x\in\mathbb{Q}_{p}:x=\sum_{i = 0}^{\infty} a_{i} p^{i}\right\}.
\end{equation*}

The next result is a well-known lemma that could be used to determine the existence of zeros in $\mathbb{Z}_{p}$ for polynomials with integer coefficients. We cite the lemma without proof here, as stated in Section 6.4 in the work by Robert\ \cite{Robert}.
\begin{lemma}[Hensel's lemma]\label{lemma:Hensel}
    Let $f (x) \in \mathbb{Z}_{p} [x]$ and suppose there is some $x_{0} \in \mathbb{Z}_{p}$ that satisfies
    \begin{equation*}
        f (x_{0}) \equiv\modd{0} {p^{n}}.
    \end{equation*}
If $\phi = \nu_{p} (f' (x_{0})) < \frac{n}{2}$, then there exists a unique zero $\xi$ of $f (x)$ in $\mathbb{Z}_{p}$ such that
    \begin{equation*}
        \xi \equiv\modd{x_{0}}  {p^{n - \phi}} \qquad \text{and} \qquad \nu_{p} (f' (\xi)) = \nu_{p} (f' (x_{0})) = \phi.
    \end{equation*}
\end{lemma}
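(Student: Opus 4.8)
The plan is to construct $\xi$ directly by Newton's method of successive approximation inside $\mathbb{Z}_p$, reading off every estimate from the ultrametric valuation. The single algebraic tool I need is the finite Taylor expansion of a polynomial with $p$-adic integer coefficients: for $f \in \mathbb{Z}_p[x]$ and any $x, h \in \mathbb{Z}_p$,
\[
f(x + h) = f(x) + f'(x)\,h + h^{2} R, \qquad f'(x + h) = f'(x) + h\,R',
\]
with $R, R' \in \mathbb{Z}_p$ (both depending on $x, h$). This follows by expanding each monomial $(x + h)^{j}$ with the binomial theorem and collecting powers of $h$; every coefficient is an integer combination of the coefficients of $f$, hence lies in $\mathbb{Z}_p$. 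The second identity is just the first applied to the polynomial $f'$. Together they encode how $f$ and $f'$ respond to a perturbation.

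Writing $\phi = \nu_p(f'(x_0))$ and $v_k = \nu_p(f(x_k))$, I define the iterates $x_{k+1} = x_k - f(x_k)/f'(x_k)$ and prove two claims by induction on $k$. \textbf{(a)} $\nu_p(f'(x_k)) = \phi$ for every $k$, so each division is legitimate and the correction $h_k = -f(x_k)/f'(x_k)$ lies in $\mathbb{Z}_p$ with $\nu_p(h_k) = v_k - \phi$. \textbf{(b)} $v_{k+1} \ge 2(v_k - \phi)$. Claim (b) comes from substituting $h = h_k$ into the first identity: the linear term cancels $f(x_k)$ by the very choice of $h_k$, leaving $f(x_{k+1}) = h_k^{2} R$, so $v_{k+1} \ge 2\nu_p(h_k)$. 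Claim (a) comes from the second identity: since $\nu_p(h_k) = v_k - \phi > \phi$, the term $h_k R'$ is strictly smaller than $f'(x_k)$, so $f'(x_{k+1})$ keeps valuation $\phi$. The strict inequality $\nu_p(h_k) > \phi$ is exactly where the hypothesis $\phi < n/2$ enters, via $v_0 - \phi \ge n - \phi > \phi$.

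Feeding (b) into itself, $v_{k+1} - 2\phi \ge 2(v_k - 2\phi)$ with $v_0 - 2\phi \ge n - 2\phi > 0$ forces $v_k \to \infty$. Hence $\nu_p(x_{k+1} - x_k) = \nu_p(h_k) = v_k - \phi \to \infty$, so $(x_k)$ is Cauchy and converges to some $\xi \in \mathbb{Z}_p$; continuity of $f$ with $v_k \to \infty$ gives $f(\xi) = 0$. The valuations $\nu_p(x_{k+1} - x_k)$ are nondecreasing and at least $n - \phi$, so the ultrametric inequality yields $\nu_p(\xi - x_0) \ge n - \phi$, that is $\xi \equiv x_0 \pmod{p^{n - \phi}}$; passing to the limit in (a) gives $\nu_p(f'(\xi)) = \phi$.

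For uniqueness, suppose $\xi'$ is another root with $\xi' \equiv x_0 \pmod{p^{n-\phi}}$ and put $\delta = \xi' - \xi$, so $\nu_p(\delta) \ge n - \phi$. The first Taylor identity at $\xi$ together with $f(\xi) = f(\xi') = 0$ gives $f'(\xi)\,\delta = -\delta^{2} R$; if $\delta \ne 0$, comparing valuations yields $\phi + \nu_p(\delta) \ge 2\nu_p(\delta)$, i.e. $\phi \ge \nu_p(\delta) \ge n - \phi$, contradicting $\phi < n/2$, so $\delta = 0$. I expect the main difficulty to be bookkeeping rather than conceptual: one must check that every iterate stays in $\mathbb{Z}_p$ and that $\nu_p(f'(x_k))$ never drops, since both the well-definedness of the iteration and the quadratic growth of $v_k$ rest on the single strict inequality $\nu_p(h_k) > \phi$ guaranteed by $\phi < n/2$. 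Once the two identities and the induction are in place, convergence, the congruence $\xi \equiv x_0 \pmod{p^{n-\phi}}$, and uniqueness all reduce to the same valuation comparisons.
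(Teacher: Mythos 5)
Your proof is correct, but note that the paper does not prove this lemma at all: it is quoted verbatim from Robert's \emph{A Course in $p$-adic Analysis} (Section 6.4) and explicitly cited without proof, so there is no in-paper argument to compare against. Your Newton-iteration proof is the standard one and is complete: the two Taylor identities are legitimate for polynomials over $\mathbb{Z}_p$, the coupled induction giving $\nu_p(f'(x_k))=\phi$ and $v_{k+1}\ge 2(v_k-\phi)$ is correctly seeded by $v_0\ge n>2\phi$ and self-sustaining because $v_{k+1}-2\phi\ge 2(v_k-2\phi)>0$, the ultrametric inequality delivers both convergence and the congruence $\xi\equiv x_0 \pmod{p^{n-\phi}}$, and the uniqueness argument via $f'(\xi)\delta=-\delta^2R$ cleanly uses $\phi<n/2$. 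Two points you glossed over but which cause no harm: if some $f(x_k)=0$ the iteration simply stabilizes (take $v_k=\infty$), and the passage from $\nu_p(f'(x_k))=\phi$ for all $k$ to $\nu_p(f'(\xi))=\phi$ deserves the one-line ultrametric justification that $\nu_p(f'(\xi)-f'(x_k))\ge\nu_p(\xi-x_k)>\phi$ eventually. What your self-contained argument buys, relative to the paper's citation, is visibility of exactly where the hypothesis $\phi<n/2$ is used (three times: integrality of the correction $h_k$, preservation of $\nu_p(f')$, and uniqueness), which is useful since the paper later applies the lemma in the borderline-free case $\phi=0$, $n\ge 1$.
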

The conditions on $\xi$ imply that only one zero can exist with those particular conditions. However, observe that there can exist more than one zero in general.

\begin{lemma}[Theorem 2.6 Byrnes et al.\ \cite{Byrnes}]\label{Byrnes:Theorem2.6}
Any infinite branch in the tree associated to the polynomial $f(n)$ corresponds to a root of $f(x)=0$ in the $p$-adic field $\mathbb{Q}_{p}$.
\end{lemma}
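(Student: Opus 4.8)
The plan is to show that an infinite branch encodes a coherent sequence of approximate roots whose limit, by completeness of $\mathbb{Z}_{p}$, is an exact root of $f$. First I would record what an infinite branch is combinatorially: it is a nested chain of residue classes $r_{1} \bmod p,\ r_{2} \bmod p^{2},\ r_{3} \bmod p^{3}, \ldots$ with $r_{m+1} \equiv r_{m} \pmod{p^{m}}$, in which every node is non-terminating. By the construction of the tree, the level-$(m+1)$ children of a node refine its residue class modulo $p^{m}$, so this nesting is automatic, and an infinite branch simply picks out one such compatible chain $(r_{m})$.

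The key computational step is the local expansion of $f$ at a node. Writing a level-$m$ node as $n = p^{m} q + r$ and expanding the quadratic, I would obtain
\begin{equation*}
f(p^{m} q + r) = f(r) + p^{m} q\, f'(r) + a\, p^{2m} q^{2},
\end{equation*}
where $f'(r) = 2ar + b$. From this I can read off the termination criterion: if $\nu_{p}(f(r)) < m$, then the term $f(r)$ strictly dominates the other two in $p$-adic valuation, so $\nu_{p}(f(p^{m} q + r)) = \nu_{p}(f(r))$ for \emph{every} $q$, and the node terminates. Taking the contrapositive, any non-terminating node at level $m$ must satisfy $f(r) \equiv 0 \pmod{p^{m}}$. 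Since every node along the infinite branch is non-terminating, this yields $f(r_{m}) \equiv 0 \pmod{p^{m}}$ for all $m$.

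With these approximate roots in hand, I would pass to the limit. The compatibility $r_{m+1} \equiv r_{m} \pmod{p^{m}}$ makes $(r_{m})$ a Cauchy sequence in the complete ring $\mathbb{Z}_{p}$, so it converges to some $\xi \in \mathbb{Z}_{p} \subseteq \mathbb{Q}_{p}$ with $\xi \equiv r_{m} \pmod{p^{m}}$ for every $m$. Because $f$ has integer coefficients, $f(\xi) - f(r_{m})$ is divisible by $\xi - r_{m}$, whence $\nu_{p}(f(\xi) - f(r_{m})) \geq \nu_{p}(\xi - r_{m}) \geq m$ and therefore $f(\xi) \equiv f(r_{m}) \equiv 0 \pmod{p^{m}}$. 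As this holds for all $m$, I conclude $\nu_{p}(f(\xi)) = \infty$, i.e.\ $f(\xi) = 0$, exhibiting the root corresponding to the branch.

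The main obstacle is the termination criterion of the second paragraph: everything hinges on correctly comparing the three valuations in the expansion and verifying that strict dominance $\nu_{p}(f(r)) < m$ genuinely forces a constant-valuation (terminating) node. Note that I only need the necessary direction (non-terminating $\Rightarrow \nu_{p}(f(r)) \geq m$); the converse may fail, but this does not affect the argument, since the infinite branch supplies non-termination at every level by hypothesis. It is also worth remarking that this reasoning is uniform in the prime and does not use that $p$ is odd, and that one could alternatively invoke Hensel's lemma (Lemma\ \ref{lemma:Hensel}) to produce $\xi$ once $m$ exceeds $2\nu_{p}(f'(r_{m}))$; the completeness argument above, however, is self-contained and avoids tracking the derivative valuation.
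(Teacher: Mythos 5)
Your argument is correct. Note that the paper itself offers no proof of this statement---it is imported verbatim as Theorem 2.6 of Byrnes et al.\ \cite{Byrnes}---so there is nothing internal to compare against; your write-up is essentially the standard argument one would expect that reference to contain. The two load-bearing steps both check out: the expansion $f(p^{m}q+r)=f(r)+p^{m}qf'(r)+ap^{2m}q^{2}$ is exact for a quadratic, and the ultrametric comparison correctly shows that $\nu_{p}(f(r))<m$ forces constant valuation, so non-termination at level $m$ gives $f(r_{m})\equiv\modd{0} {p^{m}}$; you rightly observe that only this necessary direction is needed. The passage to the limit via compatibility of the $r_{m}$, completeness of $\mathbb{Z}_{p}$, and $(\xi-r_{m})\mid(f(\xi)-f(r_{m}))$ is sound, and in fact yields the slightly stronger conclusion (stated in the paper's introduction) that the root lies in $\mathbb{Z}_{p}$, not merely in $\mathbb{Q}_{p}$. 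Your remark that one could instead invoke Hensel's lemma is a reasonable alternative, but it would require verifying the derivative condition, which your self-contained completeness argument avoids.
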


\begin{lemma}[Theorem 2.1 Medina et al.\ \cite{Medina}]\label{Medine:Theorem2.1}
Let $p$ be a prime number and $f\in\mathbb{Z}[x]$. Then $(\nu_{p}(f(n)))$ is either periodic or unbounded. Moreover, $(\nu_{p}(f(n)))$ is periodic if and only if $f(n)$ has no zeros in $\mathbb{Z}_{p}$. In the periodic case, the minimal period length is a power of $p$.
\end{lemma}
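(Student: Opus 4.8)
The plan is to prove the three assertions — the periodic/unbounded dichotomy, the equivalence of periodicity with the absence of a root in $\mathbb{Z}_p$, and the power-of-$p$ period length — by exploiting the two basic features of the $p$-adic integers that the tree construction implicitly rests on: $\mathbb{N}$ is dense in the compact space $\mathbb{Z}_p$, and a polynomial with integer coefficients extends to a continuous function on $\mathbb{Z}_p$ that respects congruences. I would organize everything around two implications, one for each side of the root dichotomy, and then read off all three conclusions. Throughout I assume $f$ is a nonzero polynomial, so that $\nu_p(f(n))$ is finite whenever $f(n) \neq 0$.

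First I would show that if $f$ has a root $\xi \in \mathbb{Z}_p$, then $(\nu_p(f(n)))$ is unbounded. The key algebraic fact is the factorization $f(n) = f(n) - f(\xi) = (n - \xi)\,h(n,\xi)$, where $h(n,\xi) = \sum_{i \geq 1} c_i(n^{i-1} + n^{i-2}\xi + \cdots + \xi^{i-1})$ and the $c_i \in \mathbb{Z}$ are the coefficients of $f$. Since $n, \xi \in \mathbb{Z}_p$ and $\mathbb{Z}_p$ is a ring, $h(n,\xi) \in \mathbb{Z}_p$ has nonnegative valuation, so multiplicativity of $\nu_p$ gives $\nu_p(f(n)) \geq \nu_p(n - \xi)$. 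Because $\mathbb{N}$ is dense in $\mathbb{Z}_p$, truncating the $p$-adic expansion of $\xi$ produces, for every $k$, a natural number $n_k$ with $n_k \equiv \xi \pmod{p^{k}}$, whence $\nu_p(f(n_k)) \geq k$; letting $k \to \infty$ yields unboundedness.

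Next I would prove the converse side: if $f$ has no root in $\mathbb{Z}_p$, then $(\nu_p(f(n)))$ is bounded and periodic with period a power of $p$. For boundedness I invoke compactness: the map $x \mapsto |f(x)|_p$ is continuous and strictly positive on the compact set $\mathbb{Z}_p$, so it attains a positive minimum, giving $\nu_p(f(x)) \leq M$ for all $x \in \mathbb{Z}_p$ and some integer $M$, in particular for all $n \in \mathbb{N}$. For periodicity I use that integer-coefficient polynomials preserve congruences: if $n \equiv n' \pmod{p^{M+1}}$ then $f(n) \equiv f(n') \pmod{p^{M+1}}$, and combining this with $\nu_p(f(n)) \leq M < M+1$ — writing $f(n) = p^{v}s$ with $v \leq M$ and $p \nmid s$ — forces $\nu_p(f(n')) = v$ as well. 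Hence $p^{M+1}$ is a period, and since the minimal period of a periodic sequence divides every period, the minimal period divides $p^{M+1}$ and is therefore a power of $p$.

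Finally I would assemble the statement. The two implications show that the sequence is unbounded when a root exists and periodic (hence bounded) when none does; since a nonzero $f$ either has or lacks a root in $\mathbb{Z}_p$, this yields both the dichotomy and the mutual exclusivity of its two cases, and taking contrapositives gives ``periodic if and only if $f$ has no zero in $\mathbb{Z}_p$.'' The power-of-$p$ period length is already part of the no-root case. I expect the main obstacle to be the boundedness step: the remaining arguments are elementary congruence and density manipulations, whereas the passage from ``no root'' to ``uniformly bounded valuation'' genuinely requires the compactness of $\mathbb{Z}_p$ (equivalently, a K\"onig-type or pigeonhole argument extracting a $p$-adic root from a sequence of integers along which the valuation blows up) rather than any finite computation.
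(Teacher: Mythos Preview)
The paper does not supply its own proof of this lemma: it is quoted verbatim as Theorem~2.1 of Medina et al.\ and immediately followed by the next cited result, with no intervening proof environment. So there is no ``paper's proof'' to compare your proposal against.

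That said, your argument is correct and is essentially the standard one (and, in outline, the one Medina et al.\ give). The factorization $f(n)-f(\xi)=(n-\xi)h(n,\xi)$ with $h\in\mathbb{Z}_p$ cleanly gives $\nu_p(f(n))\geq\nu_p(n-\xi)$, and truncating the $p$-adic expansion of $\xi$ produces the unbounded subsequence. In the no-root direction, your compactness argument for a uniform bound $M$ is exactly right, and the congruence step is handled correctly: from $f(n)\equiv f(n')\pmod{p^{M+1}}$ and $\nu_p(f(n))\leq M$ you get $\nu_p(f(n'))=\nu_p(f(n))$, so $p^{M+1}$ is a period and the minimal period divides it. Your closing remark that an unbounded sequence cannot be periodic (finite range on a period) is what closes the biconditional. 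One tiny point worth making explicit: in the no-root case $f(n)\neq 0$ for every $n\in\mathbb{N}$ (since $\mathbb{N}\subset\mathbb{Z}_p$), so $\nu_p(f(n))$ is a genuine integer everywhere and the periodicity statement is well-posed; you implicitly use this when writing $f(n)=p^{v}s$ with $p\nmid s$.
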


From the proof of Theorem 4.1 by Medina et al.\ \cite{Medina}, we have the following result regarding the existence of square roots in $\mathbb{Z}_{p}$.
\begin{proposition}\label{theorem:sqrtquadres}
    Let $p$ be an odd prime, and let $a \in \mathbb{Z}$. For nonzero $a$, let $d$ be the unique integer which satisfies $a = p^{\nu_{p} (a)} \cdot d$. Then there exists a square root of $a$ in $\ZZ_p$ if and only if $\nu_{p} (a)$ is even and $d$ is a quadratic residue modulo $p$.
\end{proposition}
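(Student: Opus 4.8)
The plan is to prove both directions by first reducing to the case where $a$ is a $p$-adic unit, and then invoking Hensel's lemma (Lemma~\ref{lemma:Hensel}). The multiplicativity of the valuation handles the power of $p$, and the oddness of $p$ is exactly what makes the derivative condition in Hensel's lemma trivial to verify.

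For the forward direction, suppose $a = x^2$ for some $x \in \ZZ_p$. Multiplicativity of the valuation gives $\nu_p(a) = 2\nu_p(x)$, so $\nu_p(a)$ is necessarily even; write $\nu_p(a) = 2k$, so that $\nu_p(x) = k$. I would then write $x = p^k u$ with $u$ a unit of $\ZZ_p$, whence $a = p^{2k} u^2$ forces $d = u^2$. Reducing modulo $p$, the reduction $\bar u$ is nonzero in $\mathbb{F}_p$ and $\bar d = \bar u^{\,2}$, so $d$ is a quadratic residue modulo $p$.

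For the converse, suppose $\nu_p(a) = 2k$ is even and $d$ is a quadratic residue modulo $p$, so there is an integer $x_0$ with $p \nmid x_0$ and $x_0^2 \equiv d \pmod p$. Since $p^{2k} = (p^k)^2$ is already a square in $\ZZ_p$, it suffices to produce a square root of $d$, and then $(p^k\xi)^2 = a$. I would apply Hensel's lemma to $g(x) = x^2 - d$: we have $g(x_0) \equiv 0 \pmod p$, so we take $n = 1$, and $g'(x_0) = 2x_0$. Here is where the hypothesis that $p$ is odd is essential: since $p \nmid 2$ and $p \nmid x_0$, we get $\phi = \nu_p(g'(x_0)) = 0 < \tfrac{1}{2} = \tfrac{n}{2}$. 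Hensel's lemma then yields a zero $\xi \in \ZZ_p$ of $g$, that is $\xi^2 = d$, completing the argument.

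The only delicate point is the verification of the Hensel hypothesis $\phi < n/2$, and this is immediate precisely because $p$ is odd: the derivative $2x_0$ is then a $p$-adic unit, so $\phi = 0$ and the starting congruence modulo $p^1$ already suffices. For $p = 2$ the same computation would give $\nu_2(2x_0) \geq 1$, so the inequality $\phi < n/2$ fails at $n=1$ and one needs a congruence modulo a higher power of $2$; this is exactly why the proposition is restricted to odd primes.
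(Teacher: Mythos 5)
Your argument is correct, and it is essentially the standard one: the paper itself states Proposition~\ref{theorem:sqrtquadres} without proof, importing it from the proof of Theorem 4.1 of Medina et al., where the same reduction to a unit followed by an application of Hensel's lemma (the paper's Lemma~\ref{lemma:Hensel}) is used. Your verification that $\phi = \nu_p(2x_0) = 0 < n/2$ with $n=1$ is exactly the point where oddness of $p$ enters, and the forward direction via $\nu_p(a) = 2\nu_p(x)$ and reduction of the unit part modulo $p$ is likewise the intended reasoning.
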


Determining whether a tree associated with the quadratic function is finite or infinite is tied to the zeros of that quadratic function being in $\mathbb{Z}_{p}$. The zeros of the quadratic depend on the square root of the discriminant being in $\mathbb{Z}_{p}$. Hence, there is a need to work with quadratic residues. Next, we present the definition and some basic facts.
\begin{definition}
A number $r \in \mathbb{Z}$ is a \textit{quadratic residue modulo} $p$ if there exists $x \in \mathbb{Z}$ such that $x^{2} \equiv r \pmod p$.
\end{definition}

Next, we prove some simple ideas involving quadratic residues that will be useful later in the paper.
\begin{lemma}\label{lemma:evenvalsqrt}
Let $a \in \mathbb{Q}_{p}$. If there exists $x \in \mathbb{Q}_{p}$ such that $x^{2} = a$, then $\nu_{p} (a) = 2 k$ for some $k \in \mathbb{Z}$.
\end{lemma}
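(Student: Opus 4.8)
The plan is to reduce the claim to a single application of the multiplicativity of the $p$-adic valuation, recorded earlier as the identity $\nu_{p}(xy) = \nu_{p}(x) + \nu_{p}(y)$ for $x, y \in \mathbb{Q}_{p}$. The entire content of the lemma is that a square in $\mathbb{Q}_{p}$ must have even valuation, and this is immediate once one observes that squaring doubles the valuation.

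First I would dispose of the degenerate case. If $a = 0$, then $\nu_{p}(a) = \infty$ by the stated convention, so the conclusion $\nu_{p}(a) = 2k$ with $k \in \ZZ$ should be read as applying to nonzero $a$; I therefore assume $a \neq 0$. Under this assumption, any $x$ with $x^{2} = a$ is necessarily nonzero, so its valuation $\nu_{p}(x)$ is a genuine integer $m \in \ZZ$ rather than $\infty$. This bookkeeping about the convention $\nu_{p}(0) = \infty$ is the only step requiring any care, precisely because it is what guarantees that the resulting $k$ lands in $\ZZ$.

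Next I would write $a = x \cdot x$ and apply the product formula to obtain $\nu_{p}(a) = \nu_{p}(x \cdot x) = \nu_{p}(x) + \nu_{p}(x) = 2\nu_{p}(x)$. Setting $k = \nu_{p}(x) = m \in \ZZ$ then yields $\nu_{p}(a) = 2k$, as required. Since the argument is essentially a one-line computation, I do not anticipate any genuine obstacle; the work is entirely in the setup rather than in the deduction.
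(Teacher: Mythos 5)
Your proof is correct and follows essentially the same route as the paper: write $\nu_{p}(a)=\nu_{p}(x^{2})=2\nu_{p}(x)$ via multiplicativity of the valuation and set $k=\nu_{p}(x)$. Your explicit handling of the degenerate case $a=0$ (where $\nu_{p}(a)=\infty$) is a small point of extra care that the paper glosses over, but it does not change the argument.
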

\begin{proof}
Let $a \in \mathbb{Q}_{p}$ and suppose there exists $x \in \mathbb{Q}_{p}$ such that $x^{2} = a$. Then $\nu_{p} (x^{2}) = 2 \nu_{p} (x) = \nu_{p} (a)$. Since $p$-adic valuations must be integers, then $\nu_{p} (a) = 2 k$ for some $k \in \mathbb{Z}$.
\end{proof}
\begin{lemma}\label{lemma:sqrtZp}
Let $a \in \mathbb{Z}_{p}$. If there exists $x \in \mathbb{Q}_{p}$ such that $x^{2} = a$, then $x \in \mathbb{Z}_{p}$.
\end{lemma}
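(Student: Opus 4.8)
The plan is to reduce everything to a statement about valuations, using the fact that membership in $\mathbb{Z}_p$ is characterized by nonnegativity of the valuation: an element $y \in \mathbb{Q}_p$ lies in $\mathbb{Z}_p$ if and only if $\nu_p(y) \geq 0$. This follows directly from the definitions of $\mathbb{Q}_p$ and $\mathbb{Z}_p$ given in the preliminaries, since $y = \sum_{i=m}^{\infty} a_i p^i$ with $a_m \neq 0$ has $\nu_p(y) = m$, and the $p$-adic integers are exactly those series with $m \geq 0$. So the entire task is to show that $\nu_p(x) \geq 0$.

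First I would apply the multiplicativity of the valuation, $\nu_p(xy) = \nu_p(x) + \nu_p(y)$, which is recorded in the preliminaries, to the equation $x^2 = a$. This gives
\begin{equation*}
\nu_p(a) = \nu_p(x^2) = 2\,\nu_p(x).
\end{equation*}
Since $a \in \mathbb{Z}_p$, the hypothesis already guarantees $\nu_p(a) \geq 0$. Combining these, $2\,\nu_p(x) = \nu_p(a) \geq 0$, and dividing by $2$ yields $\nu_p(x) \geq 0$, which is precisely the condition for $x \in \mathbb{Z}_p$.

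There is essentially no obstacle here; the only point requiring a moment of care is the degenerate case $a = 0$. In that case $x = 0$ and $\nu_p(x) = \nu_p(0) = \infty$, which is consistent with the convention $\nu_p(0) = \infty$ stated earlier and still satisfies $\nu_p(x) \geq 0$, so $x = 0 \in \mathbb{Z}_p$ as desired. For $a \neq 0$ the valuations are genuine integers and the displayed computation is literally an identity of integers. I would note that this lemma is the natural companion to Lemma~\ref{lemma:evenvalsqrt}: that lemma extracts the parity constraint $\nu_p(a) = 2k$ from the existence of a square root, while this one extracts the sign constraint, and together they say that a square root of an element of $\mathbb{Z}_p$ can never "escape" to a negative valuation.
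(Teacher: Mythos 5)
Your proposal is correct and follows the same route as the paper's own proof: both derive $2\nu_p(x) = \nu_p(a)$ from multiplicativity of the valuation (the paper cites Lemma~\ref{lemma:evenvalsqrt} for this identity) and conclude $\nu_p(x) \geq 0$ from $\nu_p(a) \geq 0$. Your explicit handling of the degenerate case $a = 0$ is a small additional point of care that the paper omits but does not change the argument.
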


\begin{proof}
Let $a \in \mathbb{Z}_{p}$. Suppose there exists $x \in \mathbb{Q}_{p}$ such that $x^{2} = a$. From Lemma\ \ref{lemma:evenvalsqrt}, we have $2 \nu_{p} (x) = \nu_{p} (a)$. Since $\nu_{p} (a) \geq 0$, it follows that $\nu_{p} (x) \geq 0$ and thus $x \in \mathbb{Z}_{p}$.
\end{proof}

In conjunction with quadratic residues, we will later make use of the Legendre symbol. We define the \textit{Legendre symbol}, denoted by $\left(\frac{a}{p}\right)$, as a function of an integer $a$ and an odd prime $p$ such that
\begin{equation*}
\left(\frac{a}{p}\right)=\begin{cases}
1,&\ \text{if}\  a\, \text{is a quadratic residue modulo}\ p\ \text{and}\ a\not\equiv\modd{0} {p};\\
-1,&\ \text{if}\ a\, \text{is not a quadratic residue modulo}\ p;\\
0,&\ \text{if}\ a\equiv\modd{0} {p}.
\end{cases}
\end{equation*}

Note the Legendre symbol can be calculated by
\begin{equation*}
\left(\frac{a}{p}\right)\equiv\modd{a^{\frac{p-1}{2}}} {p}\ \text{and}\ \left(\frac{a}{p}\right)\in\left\{-1,0,1\right\}.
\end{equation*}

In the following lemma, we state several other properties of the Legendre symbol.
\begin{lemma} Let $p$ be an odd prime and $a$, $b$ be any integers. Then:
\begin{enumerate}
    \item If $a\equiv\modd{b} {p}$, then $\left(\frac{a}{p}\right)=\left(\frac{b}{p}\right)$.
    \item The Legendre symbol has the multiplicative property. That is, $\left(\frac{a}{p}\right)\left(\frac{b}{p}\right)= \left(\frac{ab}{p}\right)$.
\end{enumerate}
\end{lemma}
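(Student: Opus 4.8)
The plan is to derive both statements directly from Euler's criterion, namely the congruence $\left(\frac{a}{p}\right)\equiv a^{(p-1)/2}\pmod{p}$ recorded just above, together with the fact that every Legendre symbol lies in $\{-1,0,1\}$. The common strategy in each part is to show that the two quantities we wish to equate are congruent modulo $p$, and then to upgrade that congruence to genuine equality by exploiting the bounded range of the symbols. This keeps the argument uniform and avoids a separate case analysis on whether $a$, $b$, or $ab$ is divisible by $p$.

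For part 1, I would begin with the hypothesis $a\equiv b\pmod{p}$ and raise both sides to the power $(p-1)/2$, which gives $a^{(p-1)/2}\equiv b^{(p-1)/2}\pmod{p}$. Applying Euler's criterion to each side then yields $\left(\frac{a}{p}\right)\equiv\left(\frac{b}{p}\right)\pmod{p}$. For part 2, I would instead use the elementary identity $a^{(p-1)/2}\,b^{(p-1)/2}=(ab)^{(p-1)/2}$ and apply Euler's criterion to all three factors, obtaining the chain $\left(\frac{a}{p}\right)\left(\frac{b}{p}\right)\equiv a^{(p-1)/2}\,b^{(p-1)/2}=(ab)^{(p-1)/2}\equiv\left(\frac{ab}{p}\right)\pmod{p}$.

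The only step that is not purely mechanical, and hence the main obstacle — though a mild one — is passing from a congruence modulo $p$ to an equality of integers. In both parts the two sides belong to $\{-1,0,1\}$, so their difference lies in $\{-2,-1,0,1,2\}$; since $p$ is an odd prime we have $p\geq 3$, and the only multiple of $p$ in that range is $0$. Hence the congruences established above force the corresponding equalities, completing both parts. I expect the write-up to be short, with the bounded-range observation stated once and reused for each part.
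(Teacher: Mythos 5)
Your proof is correct. Note that the paper itself states this lemma without proof, treating it as a standard fact; your argument via Euler's criterion --- the congruence $\left(\frac{a}{p}\right)\equiv a^{(p-1)/2}\pmod{p}$ that the paper records immediately before the lemma --- together with the observation that two elements of $\{-1,0,1\}$ congruent modulo an odd prime must be equal, is exactly the standard derivation, and it correctly handles the cases where $p$ divides $a$, $b$, or $ab$ without any extra case analysis.
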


To determine the valuations, we frequently use properties of primes and polynomials. To simplify calculations, we mention a property of considering polynomials modulo a power of a prime $p$, extending the property introduced by Byrnes et al.\ \cite{Byrnes} when the power of $p$ is $1$.
\begin{proposition}\label{proposition:fmodp}
Let $f (n)$ be a polynomial with integer coefficients, and let $p$ be a prime. Then for $q, r \in \mathbb{Z}$ with $i \geq 1$, we have $f (p^{i} q + r) \equiv\modd{f(r)} {p^{i}}$.
\end{proposition}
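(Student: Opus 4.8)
The plan is to prove this directly by binomial expansion, reducing the statement to the elementary fact that integer polynomials preserve congruences modulo $p^i$. First I would record the trivial observation that $p^i q + r \equiv r \pmod{p^i}$, since their difference $p^i q$ is divisible by $p^i$. It then suffices to show that any $f \in \mathbb{Z}[x]$ sends integers congruent modulo $p^i$ to values congruent modulo $p^i$; applying this with the two inputs $p^i q + r$ and $r$ yields the claim.

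To carry this out concretely, I would write $f(x) = \sum_{k=0}^{d} c_k x^k$ with each $c_k \in \mathbb{Z}$ and expand each power via the binomial theorem:
\begin{equation*}
(p^i q + r)^k = \sum_{j=0}^{k} \binom{k}{j} (p^i q)^j \, r^{k-j}.
\end{equation*}
Every term with $j \geq 1$ carries a factor of $(p^i)^j$, hence is divisible by $p^i$, while the $j = 0$ term is exactly $r^k$. Thus $(p^i q + r)^k \equiv r^k \pmod{p^i}$ for each $k$, and multiplying by the integer coefficients $c_k$ and summing gives $f(p^i q + r) \equiv \sum_k c_k r^k = f(r) \pmod{p^i}$, as desired.

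There is no genuine obstacle here, as this is a standard congruence property of polynomials with integer coefficients. The only point requiring any care is confirming that each higher-order binomial term is divisible by the full modulus $p^i$, rather than merely by $p$; but this is immediate, since $j \geq 1$ and $i \geq 1$ together give $p^i \mid (p^i)^j$. An equivalent and perhaps more structural route would be to invoke the factorization $a^k - b^k = (a - b)\bigl(a^{k-1} + a^{k-2} b + \cdots + b^{k-1}\bigr)$, which shows that $(a - b) \mid (a^k - b^k)$ and hence that any common factor of $a - b$ divides $f(a) - f(b)$; taking $a = p^i q + r$, $b = r$, recovers the same conclusion.
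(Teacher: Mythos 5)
Your proof is correct and follows essentially the same route as the paper's: both expand each power $(p^i q + r)^k$ via the binomial theorem and observe that every term with a positive power of $p^i q$ is divisible by $p^i$, leaving only $r^k$. The alternative remark via the factorization of $a^k - b^k$ is a nice aside but does not change the substance.
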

\begin{proof}
Let $f (n) = a_{m} n^{m} + \cdots + a_{1} n + a_{0} \in \mathbb{Z} [n]$. Then applying the binomial theorem gives
\begin{equation*}
    \begin{split}
        f (p^{i} q + r) &= a_{m} (p^{i} q + r)^{m} + a_{m - 1} (p^{i} q + r)^{m - 1} + \cdots + a_{0} \\
        &= a_{m} \sum_{k = 0}^{m} \binom{m}{k} (p^{i} q)^{k} r^{m - k} + a_{m - 1} \sum_{k = 0}^{m - 1} \binom{m - 1}{k} (p^{i} q)^{k} r^{m - 1 - k} + \cdots + a_{0} \\
        &\equiv\modd{a_{m} r^{m} + a_{m - 1} r^{m - 1} \cdots + a_{0}} {p^{i}} \\
        &\equiv\modd{f(r)} {p^{i}}.
    \end{split}
\end{equation*}
In other words, when considering a polynomial $f (n) \in \mathbb{Z} [n]$ modulo $p^{i}$, the input can be reduced modulo $p^{i}$ before proceeding with the evaluation of the polynomial.
\end{proof}

\section{The case for \texorpdfstring{$a \equiv \modd{0} {p}$}{a equiv modd{0} {p}} }

In this section, we will prove parts (1) and (2) from Theorem\ \ref{theorem:summary}. We need the following lemma to prove part (1) of Theorem\ \ref{theorem:summary}.
\begin{lemma}\label{proposition:dotTrees}
    Let $p$ be a prime and let $f(n)=an^2+bn+c \in \mathbb{Z} [n]$. Then $\nu_p(f(n))=0$ if and only if $f(r) \not \equiv 0 \pmod p$ holds for all $r \in \{0, 1, \ldots, p-1\}$.
\end{lemma}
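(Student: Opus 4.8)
The plan is to prove the biconditional in two directions, with the whole argument resting on the single observation that $\nu_p(f(n)) = 0$ is equivalent to $p \nmid f(n)$, i.e.\ to $f(n) \not\equiv 0 \pmod p$, together with Proposition\ \ref{proposition:fmodp} specialized to $i = 1$. Here I read the assertion ``$\nu_p(f(n)) = 0$'' as a statement quantified over all $n$, namely that the sequence $(\nu_p(f(n)))$ is identically zero (equivalently, that the top node of the tree is terminating with valuation $0$). So the precise goal is to show that $\nu_p(f(n)) = 0$ for every $n \in \mathbb{N}$ if and only if $f(r) \not\equiv 0 \pmod p$ for each of the finitely many residues $r \in \{0, 1, \ldots, p-1\}$.

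For the forward direction I would simply specialize. Assuming $\nu_p(f(n)) = 0$ for all $n$, applying this to each $n = r$ with $r \in \{0, 1, \ldots, p-1\}$ gives $\nu_p(f(r)) = 0$, which is exactly the condition $f(r) \not\equiv 0 \pmod p$.

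For the backward direction I would pass from an arbitrary input to its residue. Given $n \in \mathbb{N}$, write $n = pq + r$ where $r \in \{0, 1, \ldots, p-1\}$ is the remainder of $n$ modulo $p$. Proposition\ \ref{proposition:fmodp} with $i = 1$ yields $f(n) = f(pq + r) \equiv f(r) \pmod p$, and since the hypothesis guarantees $f(r) \not\equiv 0 \pmod p$, we obtain $f(n) \not\equiv 0 \pmod p$, i.e.\ $\nu_p(f(n)) = 0$. As $n$ was arbitrary, the sequence is identically zero.

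There is no real obstacle here: the entire content is supplied by Proposition\ \ref{proposition:fmodp}, which collapses the infinitely many conditions ``$f(n) \not\equiv 0 \pmod p$ for all $n$'' down to a finite check over a complete residue system modulo $p$. The only point deserving a moment's care is the interpretation of the hypothesis as a statement about all $n$ rather than a single value, since $\nu_p \circ f$ is a function of $n$ and the lemma is really asserting the constancy of this function at the value $0$.
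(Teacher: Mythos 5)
Your proposal is correct and follows essentially the same route as the paper: both arguments interpret $\nu_p(f(n))=0$ as a statement about all $n$, reduce to the residues $r\in\{0,1,\ldots,p-1\}$, and invoke Proposition\ \ref{proposition:fmodp} with $i=1$ to get $f(pq+r)\equiv f(r)\pmod p$. Your write-up merely spells out the two directions more explicitly than the paper does.
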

\begin{proof}
    Let $p$ be a prime and let $f(n)=an^2+bn+c \in \mathbb{Z}[n]$. Then
    $\nu_p(f(n))=0$ if and only if 
  $\nu_p(f(pq+r))=0$ for all $r \in \{0, 1, \ldots, p-1\}$. From Proposition\ \ref{proposition:fmodp}, we see that $f (p q + r) \equiv f (r) \pmod p$. Therefore, $\nu_p(f(n))=0$ if and only if $f(r) \not \equiv 0 \pmod p$ holds for all $r \in \{0, 1, \ldots, p-1\}$.
\end{proof}

\begin{proposition}\label{Proprosition:InfiniteTree}
Let $p$ be an odd prime and let $f (n) = a n^{2} + b n + c \in \mathbb{Z} [n]$. If $a \equiv 0 \pmod p$, $b \equiv 0 \pmod p$, and $c \not\equiv 0 \pmod p$, then the corresponding tree is a single node with valuation zero.
\end{proposition}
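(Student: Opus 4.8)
The plan is to apply Lemma~\ref{proposition:dotTrees} directly, so the entire argument reduces to a one-line computation of $f$ modulo $p$. First I would reduce $f$ modulo $p$ using the hypotheses $a \equiv 0 \pmod p$ and $b \equiv 0 \pmod p$: for any $r \in \{0, 1, \ldots, p-1\}$ we have
\begin{equation*}
f(r) = a r^{2} + b r + c \equiv c \pmod p .
\end{equation*}
Since $c \not\equiv 0 \pmod p$ by assumption, this immediately gives $f(r) \not\equiv 0 \pmod p$ for every such $r$. (Alternatively, one could invoke Proposition~\ref{proposition:fmodp} to justify reducing the input modulo $p$ before evaluating, but for a single residue $r$ the computation is direct.)

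Next I would invoke Lemma~\ref{proposition:dotTrees}, which states that $\nu_p(f(n)) = 0$ holds if and only if $f(r) \not\equiv 0 \pmod p$ for all $r \in \{0, 1, \ldots, p-1\}$. Having just verified the right-hand condition, I conclude that $\nu_p(f(n)) = 0$ for every $n \in \mathbb{N}$. In particular the valuation of $f(n)$ does not depend on $n$, so by the tree construction described in the introduction the top node is terminating: the tree consists of a single node labeled $\nu_p(f(n)) = 0$.

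There is essentially no obstacle here, as the proposition is an immediate special case of the setup already developed. The only points worth stating explicitly are the reduction $f(r) \equiv c \pmod p$ forced by $p \mid a$ and $p \mid b$, and the appeal to Lemma~\ref{proposition:dotTrees} to pass from this residue-level nonvanishing to the global identity $\nu_p(f(n)) = 0$, together with the observation that a constant valuation corresponds precisely to a non-splitting root node.
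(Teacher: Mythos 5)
Your proposal is correct and follows essentially the same route as the paper: reduce $f(r) \equiv c \pmod p$ using $p \mid a$ and $p \mid b$, observe $c \not\equiv 0 \pmod p$, and conclude via Lemma~\ref{proposition:dotTrees} that the tree is a single node with valuation zero. The paper merely writes $a = p\alpha$, $b = p\beta$ explicitly before making the identical deduction.
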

\begin{proof}
Let $p$ be an odd prime. Let $f(n)=an^2+bn+c \in \mathbb{Z}[n]$ such that $a \equiv 0 \pmod{p}$, $b \equiv 0 \pmod{p}$, and $c \not \equiv 0 \pmod{p}$.  This implies that $a=p\alpha$ and $b=p\beta$ for some integers $\alpha$ and $\beta$.  Therefore for all $r \in \{0, 1, \ldots, p-1\}$, we have that $f(r)=p\alpha r^2+p\beta r+c$. Since $c \not \equiv 0 \pmod{p}$, then $f(r)=p\alpha r^2+p\beta r+c \not \equiv 0 \pmod{p}$.  Therefore by Lemma\ \ref{proposition:dotTrees}, the tree of $f(n)$ is a dot tree with a valuation of 0.
\end{proof}

The trees described in the above proposition are the simplest types of trees one can have, and we will refer to them as \textit{dot trees}. The dot trees in Proposition\ \ref{Proprosition:InfiniteTree} have only the valuation of zero. However, dot trees may have valuations other than zero. This case occurs when the quadratic coefficients are all divisible by $p$ to some power. We can state this formally as the following corollary.

\begin{corollary}
Let $p$ be a prime and let $f (n) = a n^{2} + b n + c$. Then the corresponding tree has a single valuation of $k \geq 0$ if and only if there exists a $k\in\mathbb{N}$ such that $p^{k}$ divides $a$, $b$, and $c$, but $p^{k+1}$ divides $a$ and $b$ but not $c$.
\end{corollary}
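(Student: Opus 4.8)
The plan is to reduce everything to the valuation-zero situation already settled in Proposition~\ref{Proprosition:InfiniteTree}. The exponent $k$ will be pinned down by $\nu_p(c)$, and factoring $p^{k}$ out of $f$ turns a dot tree of valuation $k$ into a dot tree of valuation $0$; the additivity $\nu_p(p^{k}g(n))=k+\nu_p(g(n))$ is what makes this reduction legitimate. The two implications then separate: the backward direction is a direct application of Proposition~\ref{Proprosition:InfiniteTree}, while the forward direction requires squeezing divisibility information out of the constancy of $\nu_p(f(n))$.

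For the backward implication, suppose $p^{k}\mid a,b,c$, $p^{k+1}\mid a$, $p^{k+1}\mid b$, and $p^{k+1}\nmid c$. Writing $a=p^{k}\alpha$, $b=p^{k}\beta$, $c=p^{k}\gamma$, the hypotheses give $\alpha\equiv 0$, $\beta\equiv 0$, and $\gamma\not\equiv 0\pmod p$. Then $f(n)=p^{k}g(n)$ with $g(n)=\alpha n^{2}+\beta n+\gamma$, and $g$ satisfies the hypotheses of Proposition~\ref{Proprosition:InfiniteTree}, so its tree is a single node with $\nu_p(g(n))=0$. Hence the tree of $f$ is a single node of valuation $k$.

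For the forward implication, assume $\nu_p(f(n))=k$ for every $n\in\mathbb{N}$. Evaluating at $n=0$ gives $\nu_p(c)=k$, which already supplies $p^{k}\mid c$ and $p^{k+1}\nmid c$. To recover the remaining divisibilities I would evaluate at $n=0,1,2$: each of $c$, $a+b+c$, $4a+2b+c$ is divisible by $p^{k}$, so $a+b\equiv 0$ and $2a+b\equiv 0\pmod{p^{k}}$ (using that $2$ is invertible modulo $p^{k}$ since $p$ is odd), whence $a\equiv b\equiv 0\pmod{p^{k}}$. Factoring, $f(n)=p^{k}g(n)$ with $g(n)=\alpha n^{2}+\beta n+\gamma$, $p\nmid\gamma$, and $\nu_p(g(n))=0$ for all $n$; by Lemma~\ref{proposition:dotTrees} this means $g(r)\not\equiv 0\pmod p$ for every residue $r$.

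The step I expect to be the main obstacle is the final upgrade from $p^{k}\mid a,b$ to $p^{k+1}\mid a,b$, equivalently $p\mid\alpha$ and $p\mid\beta$, which amounts to showing that the reduction $\bar g\in\mathbb{F}_p[x]$ is the nonzero constant $\bar\gamma$. If $\bar g$ were linear it would have a root in $\mathbb{F}_p$, contradicting $g(r)\not\equiv 0\pmod p$ via Lemma~\ref{proposition:dotTrees}, so that subcase is harmless. The genuinely delicate possibility is that $\bar g$ is an irreducible quadratic: such a polynomial has no root in $\mathbb{F}_p$ and is therefore \emph{not} excluded by Lemma~\ref{proposition:dotTrees} alone, yet there $p\nmid\alpha$ and the desired conclusion $p^{k+1}\mid a$ fails. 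This regime is exactly the one where the discriminant $\beta^{2}-4\alpha\gamma$ is a non-residue modulo $p$, detected by the quadratic-residue criterion of Proposition~\ref{theorem:sqrtquadres}. Controlling or excluding this case is where I would concentrate the argument, and it is the step whose justification I would scrutinize most carefully.
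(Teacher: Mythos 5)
The paper states this corollary without proof, so there is no argument of record to compare yours against. Your backward implication is correct and is surely the intended one: write $a=p^{k}\alpha$, $b=p^{k}\beta$, $c=p^{k}\gamma$, use $\nu_p(p^{k}g(n))=k+\nu_p(g(n))$, and apply Proposition~\ref{Proprosition:InfiniteTree} to $g$. The first half of your forward implication is also sound: constancy of $\nu_p(f(n))$ forces $\nu_p(c)=k$ and, from $p^{k}\mid f(1)$ and $p^{k}\mid f(2)$, gives $p^{k}\mid a$ and $p^{k}\mid b$ for odd $p$.

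The obstacle you flagged at the end, however, is not a step awaiting justification --- it is a genuine counterexample to the statement. If the reduction $\bar g=\bar\alpha x^{2}+\bar\beta x+\bar\gamma$ is an irreducible quadratic over the field with $p$ elements, then $g(r)\not\equiv 0\pmod p$ for every $r$, so by Lemma~\ref{proposition:dotTrees} the tree of $g$ (hence of $f=p^{k}g$) is a single node, and yet $p\nmid\alpha$, so $p^{k+1}\nmid a$. Concretely, take $p=3$ and $f(n)=n^{2}+1$: the values $f(0)=1$, $f(1)=2$, $f(2)=5$ are all coprime to $3$, so the tree is a single node with valuation $0$ (equivalently, this is case 3(c) of Theorem~\ref{theorem:summary} with $\nu_{3}(D)=\nu_{3}(-4)=0$ and $\ell=0$ levels), but $3\nmid a=1$. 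So the forward implication of the corollary as printed is false, and no proof of it can exist; the correct characterization needs a second alternative, namely $p\nmid\alpha$ with $\beta^{2}-4\alpha\gamma$ a quadratic non-residue modulo $p$ (so that $\bar g$ has no root), alongside the stated case $p\mid\alpha$, $p\mid\beta$, $p\nmid\gamma$. The right move is to repair the statement rather than to try to close the gap.
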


Next, we prove part (2) of Theorem\ \ref{theorem:summary}. Again, we first need a lemma. As noted above, the tree associated with a quadratic is infinite if a zero of the quadratic is in $\mathbb{Z}_{p}$. The nature of these zeros is tied to the discriminant of the quadratic. Furthermore, we see that the $p$-adic valuation of the discriminant plays a role in determining when the tree associated to a quadratic is finite or infinite. The next lemma is used to simplify a calculation.

\begin{lemma}\label{lemma:D0}
Let $p$ be a prime and let $D = b^{2} - 4 a c$ where $a\equiv\modd{a_{0}} {p}$, $b\equiv\modd{b_{0}} {p}$, and $c\equiv\modd{c_{0}} {p}$ with $a_{0}, b_{0}, c_{0} \in \{ 0, 1, \ldots, p - 1 \}$. Then $\nu_{p} (D) = 0$ if and only if $\nu_{p} (b_{0}^{2} - 4 a_{0} c_{0}) = 0$.
\end{lemma}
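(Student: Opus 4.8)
The plan is to reduce everything modulo $p$: the discriminant $D$ is congruent to $b_0^2 - 4 a_0 c_0$ modulo $p$, and for an integer the condition $\nu_p = 0$ is nothing more than being nonzero modulo $p$. Chaining these two observations yields the desired equivalence immediately, so the whole argument is an elementary congruence computation with one small edge case to note.

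First I would record the congruence $D \equiv b_0^2 - 4 a_0 c_0 \pmod p$. From the hypotheses $a \equiv a_0$, $b \equiv b_0$, and $c \equiv c_0 \pmod p$, compatibility of congruence with multiplication gives $b^2 \equiv b_0^2 \pmod p$ and $4 a c \equiv 4 a_0 c_0 \pmod p$; subtracting these yields $D = b^2 - 4 a c \equiv b_0^2 - 4 a_0 c_0 \pmod p$. Second, I would isolate the elementary fact that for any integer $x$ one has $\nu_p(x) = 0$ if and only if $x \not\equiv 0 \pmod p$. Indeed, $\nu_p(x) = 0$ says precisely that $x \neq 0$ and $p \nmid x$; conversely, if $x \equiv 0 \pmod p$ then either $x = 0$, in which case $\nu_p(x) = \infty$, or $p \mid x$, in which case $\nu_p(x) \geq 1$, and in either case $\nu_p(x) \neq 0$.

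Finally, applying this fact to both $D$ and $b_0^2 - 4 a_0 c_0$ and substituting the congruence from the first step gives
\[
\nu_p(D) = 0 \iff D \not\equiv 0 \pmod p \iff b_0^2 - 4 a_0 c_0 \not\equiv 0 \pmod p \iff \nu_p\!\left(b_0^2 - 4 a_0 c_0\right) = 0,
\]
which is exactly the claim. I do not expect any genuine obstacle here, since the statement is just a reduction modulo $p$. The only point deserving a moment's care is the boundary case in which $D$ or $b_0^2 - 4 a_0 c_0$ equals $0$, where the valuation is $\infty$ rather than a positive integer; but this case is already absorbed into the equivalence ``$\nu_p = 0$ iff $\not\equiv 0 \pmod p$,'' because $0 \equiv 0 \pmod p$ and $\nu_p(0) = \infty \neq 0$, so both sides of the equivalence correctly register as false.
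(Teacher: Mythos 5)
Your proof is correct and follows essentially the same route as the paper's: reduce $D$ modulo $p$ to obtain $D \equiv b_0^2 - 4a_0c_0 \pmod{p}$ and then observe that $\nu_p = 0$ is equivalent to nonvanishing modulo $p$. Your explicit handling of the $\nu_p(0) = \infty$ edge case is a small point of extra care that the paper's one-line argument leaves implicit, but the substance is identical.
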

\begin{proof}
Suppose $\nu_{p}(D)=0$. Notice that from a straightforward calculation, $D$ is of the form $D=pt+b_{0}^{2}-4a_{0}c_{0}$, where $t\in\mathbb{Z}$. Then $D$ is not divisible by $p$ if and only if $b_{0}^{2}-4a_{0}c_{0}$ is not divisible by $p$.
\end{proof}

\begin{proposition}\label{Proposition:InfiniteTreeOne}
Let $p$ be an odd prime and let $f (n) = a n^{2} + b n + c \in \mathbb{Z} [n]$. If $a\equiv\modd{0} {p}$ and $b \not\equiv\modd{0} {p}$, then the corresponding tree is infinite with exactly one infinite branch.
\end{proposition}
\begin{proof}
We first show that the corresponding tree has at least one infinite branch, and then we show that it has at most one infinite branch to reach the desired result of exactly one infinite branch.

To determine that there is at least one infinite branch, we apply Hensel's lemma (Lemma\ \ref{lemma:Hensel}). Let $f(x) = a x^{2} + b x + c$. Taking the derivative gives $f'(x) = 2 a x + b$. We want to find some $x_{0} \in \mathbb{Z}_{p}$ so that $f (x_{0}) \equiv 0 \pmod{p^{n}}$ and $\nu_{p} (f' (x_{0})) < \frac{n}{2}$.

Let $x_{0} = -c b^{p - 2} \in \mathbb{Z}$. Recalling $a \equiv 0 \pmod p$ and applying Fermat's Little Theorem, we have
\begin{equation*}
    f (x_{0}) = a (-c b^{p - 2})^{2} + b (-c b^{p - 2}) + c \equiv -c b^{p - 1} + c \equiv 0 \pmod p,
\end{equation*}
so that $n \geq 1$. Then
\begin{equation*}
    f' (x_{0}) = 2 a (-c b^{p - 2}) + b \equiv b \pmod p,
\end{equation*}
which implies $\phi = \nu_{p} (f' (x_{0})) = 0$. Since $\phi < \frac{n}{2}$, Hensel's lemma guarantees the existence of a zero. Therefore the tree has at least one infinite branch.

We apply Lemma\ \ref{lemma:D0} to determine that there is exactly one infinite branch and exactly one zero of $f (n)$ is in $\mathbb{Q}_{p}$ and is not in $\mathbb{Z}_{p}$. That is, we determine that exactly one zero has a negative $p$-adic valuation. Consider the zeros $n_{1, 2} = \frac{-b \pm \sqrt{D}}{2 a}$, where $D = b^{2} - 4 a c = p^{\nu_{p} (D)} \Delta$. By hypothesis we have $a = p \alpha$, $b = p \beta + b_{0}$, and $c = p \gamma + c_{0}$ with $b_{0} \in \{ 1, 2, \ldots, p - 1 \}$, and $c_{0} \in \{ 0, 1, \ldots, p - 1 \}$. Then it follows from Lemma\ \ref{lemma:D0} that $\nu_{p}(D)=0$, since $\nu_{p}(b_{0}^{2})=0$. Furthermore, 
\begin{equation*}
    D = p (p (\beta^{2} - 4 \alpha \gamma) - 2 (2 \alpha c_{0} - \beta b_{0})) + b_{0}^{2} \equiv b_{0}^{2} \pmod p.
\end{equation*}
It follows that $D = \Delta \equiv b_{0}^{2} \pmod p$ is a quadratic residue. Since the valuation of the discriminant is even and $\Delta$ is a quadratic residue, we have $\sqrt{D} \in \mathbb{Z}_{p}$ by Proposition\ \ref{theorem:sqrtquadres}. Furthermore, neither $b$ nor $\sqrt{D}$ is congruent to zero modulo $p$. Since $\sqrt{D}\not\equiv0\pmod p$, it follows that $2\sqrt{D}\not\equiv0\pmod p$ and therefore $\sqrt{D} \not\equiv -\sqrt{D} \pmod p$. 

Furthermore, at least one $-b - \sqrt{D}$ or $-b + \sqrt{D}$ cannot be congruent to zero modulo $p$. If both were congruent to zero mod $p$, we could combine the two statements of $-b - \sqrt{D}\equiv0\pmod p$ and $-b + \sqrt{D}\equiv0\pmod p$ to get $\sqrt{D}\equiv-\sqrt{D}\pmod p$, which is a contradiction. Without loss of generality, suppose $-b - \sqrt{D} \not\equiv 0 \pmod p$. This implies $\nu_{p} \left(-b - \sqrt{D}\right) = 0$. Recalling $\nu_{p} (a) \geq 1$, we have
\begin{equation*}
    \nu_{p} (n_{1}) = \nu_{p} \left( \frac{-b - \sqrt{D}}{2 a} \right) = \nu_{p} \left(-b - \sqrt{D}\right) - \nu_{p} (2 a) = 0 - \nu_{p} (a) < 0,
\end{equation*}
and thus $n_{1} \in \mathbb{Q}_{p} \setminus \mathbb{Z}_{p}$. It follows that at least one zero is not in $\mathbb{Z}_{p}$, and thus the tree has at most one infinite branch.
\end{proof}

With a little more work, we can have the following corollary.
\begin{corollary} If $a\equiv \modd{0} {p}$ and $b\not\equiv\modd{0} {p}$, then on the $m^{th}$ level, there is only one non-terminal node with a valuation greater than or equal to $m$. The other $p-1$ nodes are terminal with valuation equal to $m-1$.
\end{corollary}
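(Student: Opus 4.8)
The plan is to leverage the unique root $\xi \in \mathbb{Z}_{p}$ of $f$ produced in the proof of Proposition\ \ref{Proposition:InfiniteTreeOne} and to reduce the entire tree structure to a single clean identity. Recall from that proof that the two roots are $n_{1} = \frac{-b-\sqrt{D}}{2a} \in \mathbb{Q}_{p} \setminus \mathbb{Z}_{p}$ with $\nu_{p}(n_{1}) = -\nu_{p}(a) < 0$, while the other root is the Hensel root lying in $\mathbb{Z}_{p}$, which we call $\xi$. First I would factor $f(x) = a(x - n_{1})(x - \xi)$ over $\mathbb{Q}_{p}$ and evaluate at an integer $n$, using additivity of $\nu_{p}$ to write $\nu_{p}(f(n)) = \nu_{p}(a) + \nu_{p}(n - n_{1}) + \nu_{p}(n - \xi)$. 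Since $\nu_{p}(n) \geq 0 > \nu_{p}(n_{1})$, the non-archimedean property forces $\nu_{p}(n - n_{1}) = \nu_{p}(n_{1}) = -\nu_{p}(a)$, so the first two contributions cancel and we obtain the governing identity
\begin{equation*}
\nu_{p}(f(n)) = \nu_{p}(n - \xi), \qquad \text{valid for every integer } n.
\end{equation*}

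With this identity in hand the tree becomes transparent. Let $\xi_{m} \in \{0,1,\ldots,p^{m}-1\}$ denote the reduction of $\xi$ modulo $p^{m}$. For a node $n \equiv r \pmod{p^{m}}$ with $r \in \{0,\ldots,p^{m}-1\}$, I would split into two cases. If $r \neq \xi_{m}$, then $\nu_{p}(r - \xi) < m$, and adding any multiple of $p^{m}$ leaves this valuation unchanged, so $\nu_{p}(f(n))$ is constant on the class and the node is terminal. If $r = \xi_{m}$, then $\nu_{p}(n - \xi) \geq m$ but is non-constant, since it rises to $\geq m+1$ exactly on the sub-class $n \equiv \xi_{m+1} \pmod{p^{m+1}}$; hence this node is non-terminal with valuation $\geq m$. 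Consequently, at each level there is exactly one non-terminal node, namely $n \equiv \xi_{m} \pmod{p^{m}}$, and this is precisely the single infinite branch guaranteed by Proposition\ \ref{Proposition:InfiniteTreeOne}.

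To extract the stated sibling structure I would argue inductively that the only nodes appearing at level $m$ are the $p$ children of the unique non-terminal node $n \equiv \xi_{m-1} \pmod{p^{m-1}}$ at level $m-1$ (all other level-$(m-1)$ nodes having already terminated). These children are the classes $n \equiv \xi_{m-1} + j p^{m-1} \pmod{p^{m}}$ for $j \in \{0,1,\ldots,p-1\}$. Exactly one value of $j$ yields the residue $\xi_{m}$, giving the non-terminal continuation with valuation $\geq m$; for the remaining $p-1$ children we have $r \equiv \xi \pmod{p^{m-1}}$ but $r \not\equiv \xi \pmod{p^{m}}$, so $\nu_{p}(r - \xi) = m-1$ exactly, and by the governing identity each such node is terminal with valuation $m-1$, as claimed.

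The main obstacle I anticipate is justifying the governing identity with full rigor: verifying that $\nu_{p}(n - n_{1}) = \nu_{p}(n_{1})$ relies on the strict inequality $\nu_{p}(n) \geq 0 > \nu_{p}(n_{1})$ and on the exact value $\nu_{p}(n_{1}) = -\nu_{p}(a)$ recorded in the previous proof, and confirming that the second factor carries the $\mathbb{Z}_{p}$-root $\xi$ rather than $n_{1}$. A secondary point requiring care is the non-constancy claim at the node $n \equiv \xi_{m} \pmod{p^{m}}$: because $\xi \in \mathbb{Z}_{p}$ has a well-defined digit expansion, exactly one of its $p$ refinements raises the valuation, which simultaneously certifies non-termination and pins down which child continues the infinite branch. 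Everything beyond these two points is routine bookkeeping with the single identity $\nu_{p}(f(n)) = \nu_{p}(n-\xi)$.
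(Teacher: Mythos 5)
Your argument is correct, and it takes a genuinely different route from the paper. The paper proves this corollary by a direct induction on levels: it shows via the congruence $p \mid br_0 + c$ and the unit $b$ that level $1$ has a unique non-terminating node, then expands $f(p^{m}q + p^{m-1}r + r_0)$ explicitly and observes that the residual term $br$ is a unit modulo $p$ for $r \neq 0$, forcing the $p-1$ siblings to terminate with valuation exactly $m-1$. You instead factor $f(x) = a(x-n_1)(x-\xi)$ over $\QQ_p$ and use the ultrametric equality $\nu_p(n - n_1) = \nu_p(n_1) = -\nu_p(a)$ (valid since $\nu_p(n) \geq 0 > \nu_p(n_1)$) to collapse everything to the identity $\nu_p(f(n)) = \nu_p(n - \xi)$, after which the tree is literally the tree of a linear function and the corollary is immediate. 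All the inputs you need are already established in the proof of Proposition~\ref{Proposition:InfiniteTreeOne}: $\nu_p(D)=0$ so $D \neq 0$ and $\sqrt{D} \in \ZZ_p$, exactly one of $-b \pm \sqrt{D}$ is a unit so exactly one root lies outside $\ZZ_p$, and Hensel supplies the root $\xi \in \ZZ_p$; your non-constancy check at the node $n \equiv \xi \pmod{p^m}$ (one child attains valuation $\geq m+1$, the others exactly $m$) is also sound. The trade-off: the paper's computation is elementary and never needs to locate a $p$-adic root, while your identity is more conceptual, identifies the infinite branch as the digit expansion of $\xi$, and yields the exact terminal valuations with essentially no bookkeeping; it is arguably the cleaner proof, at the cost of importing the analysis of the roots from the preceding proposition.
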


\begin{proof}
From Proposition\ \ref{Proposition:InfiniteTreeOne} there must exist at least one non-terminating node at $n=pq+r_{0}$, where $r_{0}\in\left\{0,1,\ldots,p-1\right\}$, such that $p\mid f(pq+r_{0})$. Furthermore, a calculation yields
\begin{equation*}
a(pq+r_{0})^{2}+b(pq+r_{0})+c=p(apq^{2}+bq+2aqr_{0}+r_{0}^{2}a/p)+br_{0}+c,
\end{equation*}
thus $p\mid (br_{0}+c)$. That is, 
\begin{equation*}
br_{0}+c=pk_{1}\ \text{for}\ k_{1}\in\mathbb{Z}.
\end{equation*}
Thus, the valuation at this node is greater than or equal to $1$. 

Additionally, suppose there exists a second non-terminating node at $n=pq+r_{0}'$, where $r_{0}'\in\left\{0,1,\ldots,p-1\right\}\setminus\left\{r_{0}\right\}$, such that $p\mid f(pq+r_{0}')$. A similar calculation shows that $p\mid (br_{0}'+c)$ as well. That is, 
\begin{equation*}
br_{0}'+c=pk_{2}\ \text{for}\ k_{2}\in\mathbb{Z}.
\end{equation*}
Taking the difference of $br_{0}+c=pk_{1}$ and $br_{0}'+c=pk_{2}$, we get
\begin{equation*}
b(r_{0}-r_{0}')=p(k_{1}-k_{2}).
\end{equation*}
Since $r_{0},r_{0}'\in\left\{0,1,\ldots,p-1\right\}$, it follows that $p\nmid (k_{1}-k_{2})$. But then $p\mid b$, which is a contradiction. Thus there exists exactly one non-terminating node. Furthermore, since
\begin{equation*}
a(pq+r_{0}')^{2}+b(pq+r_{0}')+c=p(apq^{2}+bq+2aqr_{0}'+r_{0}'^{2}a/p)+br_{0}'+c,
\end{equation*}
for all $r_{0}'\in\left\{0,1,\ldots,p-1\right\}\setminus\left\{r_{0}\right\}$, the valuations at all terminating nodes must be zero.

Suppose by induction for all levels $i$ up to $m-1$ with $m\geq2$ the terminating nodes have a valuation equal to $i-2$ and there is only one non-terminating node, with a valuation greater than or equal to $i-1$.

Now suppose the only non-terminating node is at $n=p^{m-1}q+r_{0}$, and so $p^{m-1}\mid f(p^{m-1}q+r_{0})$. This node splits into the nodes of the form $n=p^{m}q+p^{m-1}r+r_{0}$, where $r\in\left\{0,1,\ldots,p-1\right\}$. Then 
\begin{align*}
f(p^{m}q+p^{m-1}r+r_{0})=a(p^{m}q+r_{0})^{2}&+b(p^{m}q+r_{0})+c\\
&+2ap^{m-1}rr_{0}+2ap^{2m-1}qr+ap^{2(m-1)}r^{2}+bp^{m-1}r.
\end{align*}
 Without loss of generality, let the non-terminating node be at $n=p^{m}q+r_{0}$, that is $r=0$. Since $n=p^{m}q+r_{0}$ is the non-terminating node, and we know the valuations are bigger than or equal to $m-1$, the above implies 
\begin{align*}
f(p^{m}q+r_{0})&=a(p^{m}q+r_{0})^{2}+b(p^{m}q+r_{0})+c\\
&=p^{m- 1}(ap^{m+1}q^{2}+2apqr_{0}+bpq)+ar_{0}^{2}+br_{0}+c\\
&=p^{m-1}k_{3}\ \text{for}\ k_{3}\in\mathbb{Z}\ \text{depending on}\ q.
\end{align*}
Additionally, since $n=p^{m}q+r_{0}$ is the non-terminating node and $p^{m-1}\mid f(p^{m-1}q+r_{0})$, this implies $p^{m-1}\mid (ar_{0}^{2}+br_{0}+c)$. But if $p^{m}\nmid(ar_{0}^{2}+br_{0}+c)$, then the node $n=p^{m}q+r_{0}$ would have constant valuation equal to $m-1$, which contradicts the fact that this node is non-terminating. Thus we must have $p^{m}\mid f(p^{m-1}q+r_{0})$, which implies that $p\mid k_{3}$.

This fact, and the equation above, then imply
\begin{align*}
f(p^{m}q+p^{m-1}r+r_{0})&=p^{m-1}k_{3}+2ap^{m-1}rr_{0}+2ap^{2m-1}qr+ap^{2(m-1)}r^{2}+bp^{m-1}r\\
&=p^{m-1}(k_{3}+2arr_{0}+2ap^{m}qr+ap^{m-1}r^{2}+br),
\end{align*}
for all other $r\in\left\{1,\ldots,p-1\right\}$. Since $b$ and $r$ are not divisible by $p$, then $br$ is not divisible by $p$. Thus all other nodes are terminal with valuation $m-1$ on the $m^{th}$ level. This then completes the proof.
\end{proof}

\section{The case for \texorpdfstring{$a \not\equiv \modd{0} {p}$}{a not equiv modd{0} {p}}}
In this section, we first prove the case (3) from Theorem\ \ref{theorem:summary} and then we prove Theorem\ \ref{theorem:FiniteTrees}. The following proposition comes in three parts. The first two parts prove cases (3a) and (3b) of Theorem\ \ref{theorem:summary}. The third part proves case (3c). Then, with the help of Corollary\ \ref{corollary:translation}, we prove Theorem\ \ref{theorem:FiniteTrees}. 

\begin{proposition}\label{Theorem:FiniteTrees}
Let $p$ be an odd prime and let $f(n)=an^{2}+bn+c\in\mathbb{Z}[n]$ such that $\nu_{p}(a)=0$. Let $D=b^{2}-4ac$ denote the discriminant and express $D=p^{\nu_{p}(D)}\Delta$ so that $p\nmid\Delta$.
\begin{enumerate}
    \item If $D=0$, then the tree has exactly one infinite branch corresponding to the zero $n_{1}=-\frac{b}{2a}$.
    \item For $D\neq0$, if $\nu_{p}(D)$ is even and $\Delta$ is a quadratic residue modulo $p$, then the tree has exactly two infinite branches corresponding to the zeros $n_{1,2}=\frac{-b\pm\sqrt{D}}{2a}$.
    \item For $D\neq0$, if $\nu_{p}(D)$ is odd or $\Delta$ is a quadratic non-residue modulo $p$, then the tree is finite with $\ell=\lceil\nu_{p}(D)/ 2\rceil$ levels. Additionally, there exists a number $S_{\ell}\in\mathbb{Z}$ such that the tree associated to $f(n-S_{\ell})$ has the following properties:
    \begin{enumerate}
        \item The number of levels is equal to $\ell=\lceil\nu_{p}(D)/ 2\rceil$.
        \item Other than the last level, the node of the form $n=p^{j}q$ where $1\leq j\leq \ell-1$ is the only non-terminating node.
        \item The valuation of the terminating nodes is equal to $2(m-1)$ where $m$ is the level. The only exception is the node $n=p^{\ell}q$ on the last level.
        \item The node $n=p^{\ell}q$ on the last level has valuation $\nu_{p}(D)$. This is the maximum valuation of the tree.
    \end{enumerate}
\end{enumerate}
\end{proposition}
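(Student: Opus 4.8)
The plan is to base everything on completing the square. Since $\nu_{p}(a)=0$, the element $2a$ is a unit in $\mathbb{Z}_{p}$, so I would write $f(n)=a\left(n+\frac{b}{2a}\right)^{2}-\frac{D}{4a}$, where $\frac{b}{2a}\in\mathbb{Z}_{p}$ and $\nu_{p}\!\left(\frac{D}{4a}\right)=\nu_{p}(D)$. The zeros of $f$ are then $n_{1,2}=\frac{-b\pm\sqrt{D}}{2a}$, lying in $\mathbb{Q}_{p}$ exactly when $\sqrt{D}\in\mathbb{Q}_{p}$. By Lemma~\ref{Byrnes:Theorem2.6}, together with the fact that every infinite branch converges to an element of $\mathbb{Z}_{p}$ (the nodes are nested residue classes modulo increasing powers of $p$), counting infinite branches amounts to counting the distinct roots of $f$ in $\mathbb{Z}_{p}$.

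For parts (1) and (2) I would just carry out this count. When $D=0$ the polynomial has the single double root $-\frac{b}{2a}$, which lies in $\mathbb{Z}_{p}$ because $2a$ is a unit; hence there is exactly one root in $\mathbb{Z}_{p}$, and Lemma~\ref{Medine:Theorem2.1} forces an infinite tree, necessarily with one infinite branch. When $\nu_{p}(D)$ is even and $\Delta$ is a quadratic residue, Proposition~\ref{theorem:sqrtquadres} gives $\sqrt{D}\in\mathbb{Z}_{p}$, so both $n_{1,2}\in\mathbb{Z}_{p}$; they are distinct since $D\neq0$ forces $\sqrt{D}\neq0$, yielding exactly two infinite branches.

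For part (3), if $\nu_{p}(D)$ is odd or $\Delta$ is a non-residue, then Proposition~\ref{theorem:sqrtquadres} (with Lemma~\ref{lemma:evenvalsqrt}) shows $\sqrt{D}\notin\mathbb{Z}_{p}$, so $f$ has no root in $\mathbb{Z}_{p}$ and Lemma~\ref{Medine:Theorem2.1} makes the valuation sequence periodic, i.e.\ the tree finite. To pin down the shape I would choose $S_{\ell}\in\mathbb{Z}$ with $\nu_{p}\!\left(\frac{b}{2a}-S_{\ell}\right)\geq\ell$ (possible since $\mathbb{Z}$ is dense in $\mathbb{Z}_{p}$), set $\epsilon=\frac{b}{2a}-S_{\ell}$, and study $g(n)=f(n-S_{\ell})=a(n+\epsilon)^{2}-\frac{D}{4a}$. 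Evaluating $g$ on the nodes $n=p^{m}q+p^{m-1}r$ and using $\nu_{p}(\epsilon)\geq\ell$, for $r\neq0$ the square term has valuation $2(m-1)$, which is strictly below $\nu_{p}(D)$, so these $p-1$ nodes terminate with valuation $2(m-1)$; for $r=0$ the node $n=p^{m}q$ has valuation $2m$ when $p\nmid q$ but strictly larger when $p\mid q$, so it is the unique non-terminating node whenever $m<\ell$. This produces the non-terminating chain $q,\,pq,\,p^{2}q,\ldots,p^{\ell-1}q$ and establishes properties (a)--(c); transferring the resulting structure from $g$ back to $f$ is handled by Corollary~\ref{corollary:translation}.

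The main obstacle is the last level. At $n=p^{\ell}q$ I must show the valuation equals $\nu_{p}(D)$ for \emph{every} $q$, so that the node terminates. Writing $\epsilon=p^{\ell}\eta$ with $\eta\in\mathbb{Z}_{p}$ gives the exact identity $p^{-2\ell}g(p^{\ell}q)=a(q+\eta)^{2}-\frac{\Delta}{4a}$ in the even case, and here the residue hypothesis is essential: since $\Delta$ is a non-residue while $4a^{2}=(2a)^{2}$ is a residue, $\frac{\Delta}{4a^{2}}$ is a non-residue modulo $p$, so $(q+\eta)^{2}-\frac{\Delta}{4a^{2}}$ never vanishes mod $p$ and thus $\nu_{p}(g(p^{\ell}q))=2\ell=\nu_{p}(D)$. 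In the odd case the square term has valuation at least $2\ell$ whereas $\frac{D}{4a}$ has valuation $2\ell-1=\nu_{p}(D)$, so the latter dominates and the node again terminates with valuation $\nu_{p}(D)$; this yields property (d) and completes the argument.
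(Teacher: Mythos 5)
Your proposal is correct and follows essentially the same route as the paper: the same truncation $S_{\ell}$ of $b/(2a)$, the same level-by-level comparison of valuations at the nodes $n=p^{m}q+p^{m-1}r$, and the same appeal to the quadratic (non-)residue hypothesis at the final level, with parts (1) and (2) handled by counting roots in $\mathbb{Z}_{p}$ just as the paper does. The only cosmetic difference is that you complete the square once at the outset, writing $f(n-S_{\ell})=a(n+\epsilon)^{2}-\frac{D}{4a}$, whereas the paper expands $f(n-S_{\ell})=an^{2}+p^{\ell}k_{1}n+f(-S_{\ell})$ and recovers $\nu_{p}(f(-S_{\ell}))=\nu_{p}(D)$ from the invariance of the discriminant under translation; the two computations are equivalent.
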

\begin{proof}
Part (1) is trivial with $D=0$.

For part (2), consider $D=p^{\nu_{p}(D)}\Delta$ with $\nu_{p}(D)$ even and $\Delta$ a quadratic residue modulo $p$. By Proposition\ \ref{theorem:sqrtquadres}, $\sqrt{D}$ is a $p$-adic integer.  Since $\nu_{p}(2a)=0$, then $n_{1,2}=\frac{-b\pm\sqrt{D}}{2a}$ are $p$-adic integers as well. 

For part (3), define $S$ to equal in its $p$-adic expansion to
\begin{equation*}
S=\frac{b}{2a}=\sum_{i=0}^{\infty}s_{i}p^{i},
\end{equation*}
where $s_{i}\in\left\{0,1,2,\ldots,p-1\right\}$.

Define $S_{\ell}$ as 
\begin{equation*}
S_{\ell}=\sum_{i=0}^{\ell-1}s_{i}p^{i},
\end{equation*}
which is the truncation of $S$. Note that by definitions of $S$ and $S_{\ell}$ from above we have
\begin{align*}
\nu_{p}\left(\frac{b}{2a}-S_{\ell}\right)&=\nu_{p}(b-2aS_{\ell})\\
&\geq\ell.
\end{align*}
Then
\begin{equation*}
b-2aS_{\ell}=p^{\ell}k_{1},
\end{equation*}
where $k_{1}\in\mathbb{Z}$. A simple calculation gets us
\begin{align*}
f(n-S_{\ell})&=a(n-S_{\ell})^{2}+b(n-S_{\ell})+c\\
&=an^{2}+(b-2aS_{\ell})n+aS_{\ell}^{2}-bS_{\ell}+c\\
&=an^{2}+p^{\ell}k_{1}n+aS_{\ell}^{2}-bS_{\ell}+c\\
&=an^{2}+p^{\ell}k_{1}n+f(-S_{\ell}).
\end{align*}
The discriminant of $f(n)$ is equal to the discriminant of $f(n-\tau)$, where $\tau\in\mathbb{Z}$. Thus we have
\begin{equation*}
p^{2\ell}k_{1}^{2}-4af(-S_{\ell})=p^{\nu_{p}(D)}\Delta.
\end{equation*}
Since $\ell=\lceil\nu_{p}(D)/2\rceil$, then $2\ell\geq\nu_{p}(D)$ (that is, either $2\ell=\nu_{p}(D)$ or $2\ell=\nu_{p}(D)+1$). It follows that $p^{\nu_{p}(D)}$ divides $f(-S_{\ell})$ since $4a$ and $\Delta$ are not divisible by $p$. Then we have
\begin{equation*}
f(-S_{\ell})=p^{\nu_{p}(D)}k_{2},
\end{equation*} 
where $k_{2}\in\mathbb{Z}$ is not divisible by $p$. Now we have
\begin{equation*}
f(n-S_{\ell})=an^{2}+p^{\ell}k_{1}n+p^{\nu_{p}(D)}k_{2}.
\end{equation*}

At the $m^{th}$ level the nodes are of the form $n=p^{m}j+p^{m-1}r$, where $1\leq m\leq\ell$ and $r\in\left\{0,1,2,\ldots,p-1\right\}$. Then
\begin{align*}
f(n-S_{\ell})&=f(p^{m}j+p^{m-1}r-S_{\ell})\\
&=a(p^{m}j+p^{m-1}r)^{2}+p^{\ell}k_{1}(p^{m}j+p^{m-1}r)+p^{\nu_{p}(D)}k_{2}\\
&=ap^{2m}j^{2}+2ap^{2m-1}rj+ap^{2m-2}r^{2}+p^{\ell+m}k_{1}j+p^{\ell+m-1}k_{1}r+p^{\nu_{p}(D)}k_{2},
\end{align*}
when $r\neq 0$ and 
\begin{equation*}
f(n-S_{\ell})=ap^{2m}j^{2}+p^{\ell+m}k_{1}j+p^{\nu_{p}(D)}k_{2},
\end{equation*}
when $r=0$.

At a given level $m<\ell$, if $r\neq0$, then notice that $2m-2$ is the smallest exponent on $p$. We get
\begin{equation*}
f(n-S_{\ell})=p^{2m-2}(ap^{2}j^{2}+2aprj+ar^{2}+p^{\ell-m+2}k_{1}j+p^{\ell-m+1}k_{1}r+p^{\nu_{p}(D)-2m+2}k_{2}).
\end{equation*}
But notice that $ar^{2}$ is not divisible by $p$. Thus we have
\begin{equation*}
\nu_{p}(f(n-S_{\ell}))=2m-2.
\end{equation*}
If $r=0$, then $2m$ is the smallest exponent on $p$. We get
\begin{equation*}
f(n-S_{\ell})=p^{2m}(aj^{2}+p^{\ell-m}k_{1}j+p^{\nu_{p}(D)-2m}k_{2}).
\end{equation*}
Since $j$ is a natural number, then
\begin{equation*}
\nu_{p}(f(n-S_{\ell}))\geq2m.
\end{equation*}
In other words, at every level $m<\ell$, there is only one non-terminal node which is of the form $n=p^{m}j$, and all other nodes are terminating with $p$-adic valuation equal to $2m-2$.

At the $\ell$th level, our equations will now take the form
\begin{equation*}
f(n-S_{\ell})=ap^{2\ell}j^{2}+2ap^{2\ell-1}rj+ap^{2\ell-2}r^{2}+p^{2\ell}k_{1}j+p^{2\ell-1}k_{1}r+p^{\nu_{p}(D)}k_{2},
\end{equation*}
when $r\neq 0$, and 
\begin{equation*}
f(n-S_{\ell})=ap^{2\ell}j^{2}+p^{2\ell}k_{1}j+p^{\nu_{p}(D)}k_{2},
\end{equation*}
when $r=0$.

When $r\neq0$, then $2\ell-2$ is the smallest exponent on $p$. We get
\begin{equation*}
f(n-S_{\ell})=p^{2\ell-2}(ap^{2}j^{2}+2aprj+ar^{2}+p^{2}k_{1}j+pk_{1}r+p^{\nu_{p}(D)-2\ell+2}k_{2}).
\end{equation*}
Since $ar^{2}$ is not divisible by $p$, we have
\begin{equation*}
\nu_{p}(f(n-S_{\ell}))=2\ell-2.
\end{equation*}

When $r=0$, consider two cases: either $\nu_{p}(D)$ is odd or $\nu_{p}(D)$ is even. If $\nu_{p}(D)$ is odd, then $2\ell>\nu_{p}(D)$. Thus, we have
\begin{equation*}
f(n-S_{\ell})=p^{\nu_{p}(D)}(apj^{2}+pk_{1}j+k_{2}).
\end{equation*}
Recall that $k_{2}$ is not divisible by $p$, and so we have
\begin{equation*}
\nu_{p}(f(n-S_{\ell}))=\nu_{p}(D).
\end{equation*}

If  $\nu_{p}(D)$ is even, then $2\ell=\nu_{p}(D)$. Then we have
\begin{equation*}
f(n-S_{\ell})=p^{\nu_{p}(D)}(aj^{2}+k_{1}j+k_{2}).
\end{equation*}
Recall that $\nu_{p}(a)=\nu_{p}(k_{2})=0$ and $\nu_{p}(k_{1})\geq0$. Then we have
\begin{align*}
\nu_{p}(aj^{2}+k_{1}j+k_{2})&=\nu_{p}(4a^{2}j^{2}+4ak_{1}j+4ak_{2})\\
&=\nu_{p}((2aj+k_{1})^{2}-k_{1}^{2}+4ak_{2})\\
&=\nu_{p}((2aj+k_{1})^{2}-\Delta).
\end{align*}
Assume there exists an $r\in\left\{0,1,\ldots,p-1\right\}$ such that when $j=pk+r$, then
\begin{equation*}
\nu_{p}((2a(pk+r)+k_{1})^{2}-\Delta)\geq1,
\end{equation*}
which implies
\begin{equation*}
(2a(pk+r)+k_{1})^{2}-\Delta=p\kappa,
\end{equation*}
where $\kappa\in\mathbb{Z}$. Then it follows that
\begin{equation*}
(2a(pk+r)+k_{1})^{2}-\Delta\equiv( 2ar+k_{1})^{2}-\Delta \pmod p\equiv 0 \pmod p.
\end{equation*}
Thus
\begin{equation*}
(2ar+k_{1})^{2}\equiv\Delta \pmod p.
\end{equation*}
This implies that $\Delta$ is a quadratic residue, which contradicts the assumption that $\Delta$ is not a quadratic residue in the case of $\nu_{p}(D)$ even. Therefore,
\begin{equation*}
\nu_{p}(aj^{2}+k_{1}j+k_{2})=0
\end{equation*}
for all $j\in\mathbb{N}$ and this completes the proof.
\end{proof}

\begin{remark}
With the above assumption, if we have an infinite tree, then we must have that $\nu_{p} (D)$ is even and $\Delta$ is a quadratic residue modulo $p$. To see this, consider the case when either $\nu_{p} (D)$ is odd or $\Delta$ is not a quadratic residue modulo $p$. Then $\sqrt{D}$ does not exist in $\mathbb{Z}_p$, hence neither of $n_{1, 2}$ are $p$-adic integers. Therefore, the corresponding tree is finite.
\end{remark}

\begin{lemma}[Translation]\label{lemma:translation}
    Let $p$ be an odd prime and let $f(n) \in \mathbb{Z}[n]$. Suppose $s\in\mathbb{Z}$, and set $g(n)=f(n-s)$. Given $m\in\mathbb{N}$ and $r\in\mathbb{Z}$ such that $0\leq r<p^{m}$, it follows that
    \begin{equation*}
        \nu_p(f(p^{m}q+r))=\nu_p(g(p^{m}q+\modd{(r+s)} {p^{m}})).
    \end{equation*}
\end{lemma}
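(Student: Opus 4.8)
The plan is to reduce the statement to the single, essentially tautological identity $g(N)=f(N-s)$, and then to bookkeep how the residue $r$ and the level index $q$ transform when $N$ is rewritten in the normalized base-$p^{m}$ form. I would fix $m$ and $r$ with $0\leq r<p^{m}$, and for a given $q$ set $N=p^{m}q+r+s$, so that $N-s=p^{m}q+r$. Directly from the definition $g(n)=f(n-s)$ this gives
\[
g(N)=f(N-s)=f(p^{m}q+r),
\]
and taking $p$-adic valuations yields $\nu_{p}(f(p^{m}q+r))=\nu_{p}(g(N))$. This is the entire analytic content; everything else is rewriting $N$ in the form demanded by the right-hand side.

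Next I would perform the Euclidean reduction of $r+s$ modulo $p^{m}$: write $r+s=p^{m}c+t$ with $t=(r+s)\bmod p^{m}\in\{0,1,\ldots,p^{m}-1\}$ and $c=\lfloor(r+s)/p^{m}\rfloor\in\mathbb{Z}$. Substituting into $N$ gives
\[
N=p^{m}q+(r+s)=p^{m}(q+c)+t,
\]
so that $\nu_{p}(f(p^{m}q+r))=\nu_{p}(g(p^{m}(q+c)+t))$, which is exactly the asserted equality with the residue $(r+s)\bmod p^{m}=t$ appearing on the right.

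The one point requiring care --- and the main (indeed only) obstacle --- is the carry $c$: at a genuine non-terminating node the map $\nu_{p}\circ g$ is not constant along the progression, so for a single fixed $q$ the shift $q\mapsto q+c$ is not literally invisible. I would handle this by observing that $n\mapsto n+s$ is a bijection of $\mathbb{Z}$ carrying the residue class $n\equiv r\pmod{p^{m}}$ onto the class $n\equiv t\pmod{p^{m}}$; consequently the two progressions $\{p^{m}(q+c)+t:q\in\mathbb{Z}\}$ and $\{p^{m}q+t:q\in\mathbb{Z}\}$ are the \emph{same} set, and letting $q$ range over $\mathbb{Z}$ the displayed identity holds with the index stated on the right. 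In this way the subsequence $(\nu_{p}f(p^{m}q+r))_{q}$ of the $p$-kernel coincides with $(\nu_{p}g(p^{m}q+t))_{q}$ up to reindexing; in particular the two nodes terminate or fail to terminate together, and when they terminate they carry the \emph{same} constant valuation, which is precisely the form in which the lemma is invoked when transferring tree structures.
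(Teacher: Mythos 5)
Your proof is correct and rests on the same underlying idea as the paper's (translation by $s$ is a bijection of $\mathbb{Z}$ carrying the residue class $r \bmod p^{m}$ onto the class $(r+s) \bmod p^{m}$), though the paper's own proof is little more than a one-line assertion of exactly this. Your explicit treatment of the carry $c=\lfloor (r+s)/p^{m}\rfloor$ is the right call and is the one genuinely delicate point: as you observe, the displayed equality cannot hold pointwise in $q$ in general (already $f(n)=n$, $s=p^{m}$, $r=0$ breaks it), so it must be read as an identity of the two subsequences up to the index shift $q\mapsto q+c$ --- which is precisely what is needed to transfer terminating/non-terminating status and terminal valuations between the trees for $f$ and $g$, and is how the lemma is invoked in Corollary~\ref{corollary:translation}.
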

\begin{proof}
   If the node $n=p^kq+d$ terminates, the valuation $\nu_p(f(n))$ at the node of the form $n=p^kq+r$ is moved to the node of the form $n=p^kq+\modd{(r+s)} {p^k}$ under translating an equation from $f(n)$ to $f(n-s)=g(n)$.  Similarly, if the node $n=p^kq+d$ continues under $f(n)$, the node of the form $n=p^kq+\modd{(r+s)} {p^k}$ continues under translating an equation from $f(n)$ to $f(n-s)=g(n)$.
\end{proof}

\begin{corollary}\label{corollary:translation}
Let $p$ be an odd prime and let $f(n)=an^{2}+bn+c \in \mathbb{Z} [n]$ such that $\nu_{p}(a)=0$. Let $D=b^{2}-4ac\neq0$ denote the discriminant. Suppose the discriminant can be expressed as $D=p^{\nu_{p}(D)}\Delta$ so that $p \nmid \Delta$, and the valuation $\nu_{p}(D)$ is odd or $\Delta$ is a quadratic non-residue modulo $p$. Set $\ell=\lceil\nu_{p}(D)/2\rceil$. Define a natural number $S_{\ell}$ as in the proof of Proposition\ \ref{Theorem:FiniteTrees}. Then
\begin{equation*}
    \nu_{p}(f(n))=\begin{cases}
    2(m-1),&\ \text{if}\ n\equiv\modd{(p^{m-1}r-S_{\ell})} {p^{m}}\ \text{for}\ m=1,2,\ldots,\ell;\\
    \nu_{p}(D),&\ \text{if}\ n\equiv\modd{-S_{\ell}} {p^{\ell}}.\\
    \end{cases}
\end{equation*}
\end{corollary}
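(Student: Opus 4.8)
The plan is to derive this corollary as an essentially computation-free consequence of Proposition \ref{Theorem:FiniteTrees} part (3) together with the Translation Lemma (Lemma \ref{lemma:translation}). Proposition \ref{Theorem:FiniteTrees} already determines the valuation of the shifted function $g(n) = f(n - S_\ell)$ at every active node of its tree: writing the active nodes at level $m$ as $n = p^{m} j + p^{m-1} r$ with $r \in \{0,1,\ldots,p-1\}$, the proof shows that $\nu_p(g(n)) = 2(m-1)$ whenever $r \in \{1,\ldots,p-1\}$ (for $m = 1,\ldots,\ell$), while the lone non-terminating node $n \equiv 0 \pmod{p^{m}}$ carries valuation $\geq 2m$ and terminates at level $\ell$ with valuation exactly $\nu_p(D)$. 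In residue-class language this reads
\begin{equation*}
\nu_p(g(n)) = \begin{cases} 2(m-1), & n \equiv p^{m-1} r \pmod{p^{m}}, \ r \in \{1,\ldots,p-1\}, \ m = 1,\ldots,\ell;\\ \nu_p(D), & n \equiv 0 \pmod{p^{\ell}}. \end{cases}
\end{equation*}

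First I would invoke Lemma \ref{lemma:translation} with $s = S_\ell$, giving $\nu_p(f(p^{m} q + r_0)) = \nu_p\bigl(g(p^{m} q + ((r_0 + S_\ell) \bmod p^{m}))\bigr)$ for every $0 \leq r_0 < p^{m}$. The net effect is a uniform shift of residue classes by $-S_\ell$: the node of $f$'s tree at residue $r_0 \pmod{p^{m}}$ inherits precisely the valuation that $g$ assigns to the residue $(r_0 + S_\ell) \bmod p^{m}$. Then I would carry out the residue bookkeeping. For the first branch I want the shifted residue to equal $p^{m-1} r$; solving $r_0 + S_\ell \equiv p^{m-1} r \pmod{p^{m}}$ gives exactly $r_0 \equiv p^{m-1} r - S_\ell \pmod{p^{m}}$, and since $r \in \{1,\ldots,p-1\}$ forces $0 \leq p^{m-1} r < p^{m}$ so no further reduction occurs, the transferred valuation is $2(m-1)$, matching the claimed first case. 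For the second branch, demanding the shifted residue be $0 \pmod{p^{\ell}}$ yields $r_0 \equiv -S_\ell \pmod{p^{\ell}}$ and hence $\nu_p(f(n)) = \nu_p(D)$. Together these establish both lines of the stated formula.

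The argument is genuinely routine once the two inputs are assembled, so the only point demanding care—the obstacle, such as it is—is the modular bookkeeping across distinct levels: the shift $S_\ell$ is a single fixed integer but must be read modulo $p^{m}$ at level $m$, and one must confirm that the reduction already built into Lemma \ref{lemma:translation} is consistent with the truncated $p$-adic structure $S_\ell = \sum_{i=0}^{\ell-1} s_i p^{i}$. Because reducing $S_\ell$ modulo $p^{m}$ for $m \leq \ell$ merely drops higher-order digits, this consistency is automatic, and the map sending $f$'s residue classes to $g$'s residue classes is a clean bijection at each level. I therefore expect the write-up to consist of stating the $g$-valuations from Proposition \ref{Theorem:FiniteTrees}, applying Lemma \ref{lemma:translation}, and verifying the two congruences above.
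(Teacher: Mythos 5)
Your proposal is correct and follows exactly the route the paper takes: the paper's proof is the one-line observation that the corollary "follows immediately from applying Lemma \ref{lemma:translation} to Proposition \ref{Theorem:FiniteTrees}," and your residue bookkeeping (solving $r_0 + S_\ell \equiv p^{m-1}r \pmod{p^m}$ and $r_0 \equiv -S_\ell \pmod{p^\ell}$) simply makes explicit the details the paper leaves implicit.
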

\begin{proof}
This follows immediately from applying Lemma\ \ref{lemma:translation} to Proposition\ \ref{Theorem:FiniteTrees}.
\end{proof}

Additionally, with a little more work we can get the following corollary from Proposition\ \ref{Theorem:FiniteTrees} part 1.
\begin{corollary}
If $a\not\equiv \modd{0} {p}$ and $D=b^{2}-4ac=0$, then on the $m^{th}$ level, the terminal nodes have valuations equal to $2(m-1)$ and the non-terminal nodes have valuation greater than or equal to $2m$.
\end{corollary}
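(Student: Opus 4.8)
The plan is to leverage part~(1) of Proposition~\ref{Theorem:FiniteTrees}: when $D=0$ the polynomial factors as a perfect square. Since $\nu_{p}(a)=0$ and $D=b^{2}-4ac=0$, we may write $f(n)=a(n-n_{1})^{2}$ with $n_{1}=-\tfrac{b}{2a}$, and because $\nu_{p}(2a)=0$ while $b\in\mathbb{Z}$ we have $\nu_{p}(n_{1})=\nu_{p}(b)\geq 0$, so $n_{1}\in\mathbb{Z}_{p}$. This yields the master identity
\begin{equation*}
\nu_{p}(f(n))=\nu_{p}(a)+2\,\nu_{p}(n-n_{1})=2\,\nu_{p}(n-n_{1}),
\end{equation*}
valid for every input. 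The entire problem then reduces to tracking $\nu_{p}(n-n_{1})$ as one descends the tree.

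Next I would record the $p$-adic expansion $n_{1}=\sum_{i\geq 0}t_{i}p^{i}$ and write $N_{m}=\sum_{i=0}^{m-1}t_{i}p^{i}$ for its truncation, so that $N_{m}\equiv n_{1}\pmod{p^{m}}$. The claim is proved by induction on the level $m$, establishing simultaneously that (i) the unique non-terminal node at level $m$ has residue $N_{m}$, and (ii) the other $p-1$ nodes at that level terminate with valuation exactly $2(m-1)$. For the base case $m=1$, the root splits into residues $r\in\{0,\ldots,p-1\}$: if $r\neq t_{0}$ then $r-n_{1}\not\equiv 0\pmod p$ forces $\nu_{p}(pq+r-n_{1})=0$ and hence the constant valuation $0=2(1-1)$, whereas $r=t_{0}=N_{1}$ gives $\nu_{p}(pq+r-n_{1})\geq 1$ and a $q$-dependent (hence non-terminal) valuation $\geq 2$.

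For the inductive step I would use that, by hypothesis, the sole non-terminal node at level $m$ sits at $N_{m}$, so the nodes appearing at level $m+1$ are exactly its $p$ children, with residues $N_{m}+jp^{m}$ for $j\in\{0,\ldots,p-1\}$. A one-line computation gives $n-n_{1}\equiv(j-t_{m})p^{m}\pmod{p^{m+1}}$: when $j\neq t_{m}$ the factor $j-t_{m}$ is a unit modulo $p$, so $\nu_{p}(n-n_{1})=m$ is constant and the node terminates with valuation $2m=2((m+1)-1)$; when $j=t_{m}$ the residue equals $N_{m+1}$, $\nu_{p}(n-n_{1})\geq m+1$, and the valuation $\geq 2(m+1)$ varies with $q$, identifying the unique surviving non-terminal node. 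This closes the induction and produces exactly the stated dichotomy.

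The only genuine subtlety, and the step I would be most careful about, is that the advertised valuations must be read off only at the residues that actually occur as nodes at level $m$ — namely the descendants of the non-terminal node at level $m-1$ — and not at all residues modulo $p^{m}$. This is precisely what the simultaneous induction on (i) and (ii) enforces: it guarantees a single live branch at each level (consistent with the ``exactly one infinite branch'' conclusion of Proposition~\ref{Theorem:FiniteTrees}), so that every off-branch child inherits the sharp valuation $2(m-1)$ rather than some smaller even value arising from an unrealized residue.
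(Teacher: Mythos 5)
Your proof is correct and takes essentially the same route as the paper: the paper's key step is the identity $\nu_{p}(f(n))=2\nu_{p}(2an+b)$ obtained by completing the square with $D=0$, which is exactly your $\nu_{p}(f(n))=2\nu_{p}(n-n_{1})$ since $2a$ is a $p$-adic unit, followed by an induction on levels isolating the single non-terminating node. The only cosmetic difference is that you identify that node explicitly as the truncation of the $p$-adic expansion of $-b/(2a)$, whereas the paper establishes its uniqueness by a short contradiction argument at each level.
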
 
\begin{proof}
First, note that
\begin{align*}
\nu_{p}(an^{2}+bn+c)&=\nu_{p}(4a^{2}n^{2}+4abn+4ac)\\
&=\nu_{p}(4a^{2}n^{2}+4abn+b^{2}-b^{2}+4ac)\\
&=\nu_{p}((2an+b)^{2}-b^{2}+4ac)\\
&=\nu_{p}((2an+b)^{2})\\
&=2\nu_{p}((2an+b)).
\end{align*}
By Proposition\ \ref{Theorem:FiniteTrees} part 1, there must exist at least one non-terminal node at $n=pq+r_{0}$, where $r_{0}\in\left\{0,1,\ldots,p-1\right\}$, such that $p \mid (2a(pq+r_{0})+b)$. Thus the valuation at this node is greater than or equal to two by the last line of the above equality. Now suppose there exists a second non-terminating node $r_{0}'\in\left\{0,1,\ldots,p-1\right\}\backslash\left\{0\right\}$. Thus $p\mid (2a(pq+r_{0}')+b)$. From these two statements, we have
\begin{equation*}
2a(pq+r_{0})+b=pk_{1}\ \text{and}\ 2a(pq+r_{0}')+b=pk_{2},
\end{equation*}
for some $k_{1},k_{2}\in\mathbb{Z}$ which depend on $q$. Taking the difference, we have
\begin{equation*}
2a(r_{0}-r_{0}')=p(k_{1}-k_{2}).
\end{equation*}
Since $p\nmid(r_{0}-r_{0}')$, we have $p\mid a$, which is a contradiction to the hypothesis. Thus all other nodes must be terminating with a valuation equal to zero.

Suppose by induction for all levels $i$ up to $m-1$ with $m\geq2$ the terminating nodes have a valuation equal to $2(i-2)$. Furthermore, suppose there is only one non-terminating node. This node has valuation greater than or equal to $2(i-1)$.

Suppose the only non-terminating node is at $n=p^{m-1}q+r_{0}$, and so $p^{m-1}\mid (2a(p^{m-1}q+r_{0})+b)$. This node then splits into nodes of the form $n=p^{m}q+p^{m-1}r+r_{0}$, where $r\in\left\{0,1,2,\ldots,p-1\right\}$. Without loss of generality, suppose the non-terminating node is at $n=p^{m}q+r_{0}$, that is, $r=0$. Then a calculation yields
\begin{equation*}
2a(p^{m}q+p^{m-1}r+r_{0})+b=2ap^{m}q+2ap^{m-1}r+2ar_{0}+b.
\end{equation*}
Since $n=p^{m}q+r_{0}$ is the non-terminating node, and we know the valuations of $2a(p^{m}q+p^{m-1}r+r_{0})+b$ are bigger than or equal to $m-1$, the above implies 
\begin{equation*}
2a(p^{m}q+r_{0})+b=p^{m-1}k_{3},
\end{equation*}
for $k_{3}\in\mathbb{Z}$ depending on $q$. Suppose that $p^{m}\nmid (2a(p^{m}q+r_{0})+b)$. Then this non-terminating node $n=p^{m}q+r_{0}$ would have valuation $\nu_{p}(2a(p^{m}q+r_{0})+b)=2(m-1)$, which contradicts the fact it is non-terminating. Therefore, $p^{m}\mid (2a(p^{m}q+r_{0})+b)$ and this node must have a valuation greater than or equal to $2m$. But furthermore, we must have $p\mid k_{3}$. This and the above equation imply that
\begin{align*}
2a(p^{m}q+p^{m-1}r+r_{0})+b&=2ap^{m}q+2ap^{m-1}r+2ar_{0}+b\\
&=2ap^{m-1}r+p^{m-1}k_{3}\\
&=p^{m-1}(2ar+k_{3}).
\end{align*}
Since $p\mid k_{3}$ and $p\nmid ar$, all other nodes on the $m^{th}$ level must be terminating with valuation $2(m-1)$. This completes the proof.
\end{proof}

\section{Acknowledgments}
The authors would like to thank the National Science Foundation for funding the REU program at Ursinus College through grant 1851948.

\section*{Appendix: examples}
This section includes several examples of the $p$-adic valuation trees for quadratic polynomials.

\subsection*{A tree with one infinite branch}
The figure below shows a $3$-adic valuation tree for the polynomial $f(n)=12n^2+16n+7$. According to case 2 of Theorem\ \ref{theorem:summary}, the tree has one infinite branch.
\begin{figure}[H]
\begin{tikzpicture}[level distance=60pt, sibling distance=8pt]
	\Tree [.$q$ 
		\edge node[auto=right]{$3q$}; $\boxed{0}$
		\edge node[auto=right,xshift=40, yshift=-15]{$3q+1$}; $\boxed{0}$
		\edge node[auto=left]{$3q+2$}; 
  	[.$\nu_3\geq 1$
			\edge node[auto=right,yshift=-3]{$9q+2$}; $\boxed{1}$
			\edge node[auto=right, xshift=40, yshift=-15]{$9q+5$}; 
			[.$\nu_3\geq 2$
				\edge node[auto=right,yshift=-3]{$27q+5$}; $\boxed{2}$
				\edge node[auto=right, xshift=35, yshift=-15, scale=0.75]{$27q+14$}; $\boxed{2}$
				\edge node[auto=left]{$27q+23$}; 
                    [.$\nu_3\geq 3$
					\edge node[auto=right, yshift=-5]{$81q+23$}; $\nu_3\geq 4$
					\edge node[auto=right,yshift=10, xshift=-10, scale=0.6,rotate=90]{$81q+50$}; $\boxed{3}$
					\edge node[auto=left,yshift=-3]{$81q+77$}; $\boxed{3}$ ]
    		]
			\edge node[auto=left,yshift=-3]{$9q+8$}; $\boxed{1}$
		]
	]
\end{tikzpicture}

\vspace{0.2in}
  \small
    \begin{tabular}{c|cccc ccccc ccc}
$n$&0&1&2&3&4&5&6&7&8&9&10\\ \hline
$f(n)$&$7$&$35$&87&163&263&387&535&707&903&1123&1367\\
$\nu_{3}(f(n))$&0&0&1&0&0&2&0&0&1&0&0&\\
\end{tabular}

\vspace{0.2in}
\begin{tabular}{c|cccc ccccc ccc}
    $n$&11&12&13&14&15&16&17&18&19\\ \hline
    $f(n)$&1635&1927&2243&2583&2947&3335&3747&4183&4643\\
    $\nu_{3}(f(n))$&1&0&0&2&0&0&1&0&0\\
    \end{tabular}
    \vspace{0.2in}
     \caption{A $3$-adic valuation tree for the polynomial $f(n)=12n^2+16n+7$, followed by the table containing the first twenty terms of the sequence of the $3$-adic valuations generated by $f(n)$.}
    \label{Example:OneInfinteBranch}
\end{figure}

\subsection*{A tree with two infinite branches}
For the next example, consider $f(n)=n^2+1$ and $p=5$. Since $D=-5^0\cdot 4$, the $5$-adic valuation of $D$ is even, and $-4$ is a quadratic residue modulo $5$. By case 3(b) of Theorem\ \ref{theorem:summary}, the valuation tree of $f(n)$ is infinite with two branches. 
\begin{figure}[H]
\begin{tikzpicture}[scale=.77,level distance=100pt, sibling distance=-2pt]
		\Tree [.$q$ 
			\edge node[auto=right]{$5q$}; $\boxed{0}$
			\edge node[auto=right,below,yshift=-5]{$5q+1$}; $\boxed{0}$
			\edge node[auto=right,below,xshift=15]{$5q+2$}; 
			[.$\nu_5\geq 1$ 
				\edge node[scale=0.75,auto=right]{$25q+2$}; $\boxed{1}$
				\edge node[scale=0.75,auto=right]{$25q+7$};
					[.$\nu_5\geq2$
						\edge node[scale=0.75,auto=right]{$5^3q+7$}; $\boxed{2}$
						\edge node[scale=0.75,auto=left,yshift=-15,xshift=-15,rotate=52]{$5^3q+32$}; $\boxed{2}$
						\edge node[scale=0.75,auto=right,yshift=6,xshift=-7,rotate=90]{$5^3q+57$}; 
							[.$\nu_5\geq3$
								\edge node[scale=0.75,auto=right]{$5^4q+57$}; $\boxed{3}$
								\edge node[scale=0.75,auto=right,xshift=5,rotate=67]{$5^4q+182$}; $\boxed{3}$
								\edge node[scale=0.75,auto=left,yshift=-50,xshift=-6,rotate=87]{$5^4q+307$}; $\boxed{3}$
								\edge node[scale=0.75,auto=left,xshift=-3,rotate=-76]{$5^4q+432$}; $\nu_5\geq 4$
								\edge node[scale=0.75,auto=left]{$5^4q+557$}; $\boxed{3}$
							]
						\edge node[scale=0.75,auto=right,yshift=-15,xshift=15,rotate=-48]{$5^3q+82$}; $\boxed{1}$
						\edge node[scale=0.75,auto=left]{$5^3q+107$}; $\boxed{1}$
					]
				\edge node[scale=0.75,auto=right,rotate=0,yshift=0]{$25q+12$}; $\boxed{1}$ 
				\edge; $\boxed{1}$
				\edge node[scale=0.75,auto=left]{$25q+22$}; $\boxed{1}$
			]
			\edge node[auto=right,below,xshift=-15]{$5q+3$}; 
				[.$\nu_5\geq 1$ 
					\edge node[scale=0.75,auto=right]{$25q+3$}; $\boxed{1}$
					\edge; $\boxed{1}$
					\edge node[scale=0.75,auto=left]{$25q+13$}; $\boxed{1}$
					\edge node[scale=0.75,auto=left]{$25q+18$};
						[.$\nu_5\geq2$
							\edge node[scale=0.75,auto=right]{$5^3q+18$}; $\boxed{2}$
							\edge node[scale=0.75,auto=left,yshift=-15,xshift=-15,rotate=52]{$5^3q+43$}; $\boxed{2}$
							\edge node[scale=0.75,auto=right,yshift=6,xshift=-7,rotate=90]{$5^3q+68$}; 
								[.$\nu_5\geq3$
									\edge node[scale=0.75,auto=right]{$5^4q+68$}; $\boxed{3}$
									\edge node[scale=0.75,auto=right,xshift=5,rotate=74]{$5^4q+193$}; $\nu_5\geq 4$
									\edge node[scale=0.75,auto=right,xshift=-20,rotate=94]{$5^4q+319$}; $\boxed{3}$
									\edge node[scale=0.75,auto=left,xshift=-6,rotate=-68]{$5^4q+443$}; $\boxed{3}$
									\edge node[scale=0.75,auto=left]{$5^4q+568$}; $\boxed{3}$
								]
							\edge node[scale=0.75,auto=right,yshift=-15,xshift=15,rotate=-48]{$5^3q+93$}; $\boxed{2}$
							\edge node[scale=0.75,auto=left]{$5^3q+118$}; $\boxed{2}$
						]
					\edge node[scale=0.75,auto=left]{$25q+23$}; $\boxed{1}$
				]
			\edge node[auto=left]{$5q+4$}; $\boxed{0}$
		]
	\end{tikzpicture}

\vspace{0.2in}
    \small
    \begin{tabular}{c|cccc ccccc ccc}
$n$&0&1&2&3&4&5&6&7&8&9&10\\ \hline
$f(n)$&$1$&$2$&5&10&17&26&37&50&65&82&101\\
$\nu_{5}(f(n))$&0&0&1&1&0&0&0&2&1&0&0&\\
\end{tabular}

\vspace{0.2in}
\begin{tabular}{c|cccc ccccc ccc}
    $n$&11&12&13&14&15&16&17&18&19\\ \hline
    $f(n)$&0&1&0&197&226&257&290&325&362\\
    $\nu_{5}(f(n))$&0&1&1&0&0&0&1&2&0\\
    \end{tabular}
    \vspace{0.2in}
      \caption{A $5$-adic valuation tree for the polynomial $f(n)=n^2+1$, followed by the table containing the first twenty terms of the sequence of the $5$-adic valuations generated by $f(n)$.}
  \label{Example:TwoInfinteBranch}
\end{figure}

\subsection*{A finite tree}
The figures below illustrate two $3$-adic valuation trees from case 3(c) of Theorem\ \ref{theorem:summary}. Consider $f(n)=4n^2+160n-587$. Since $D=160^2+16\cdot 587=3^7\cdot 16$,
it follows that $\nu_3(D)=7$ is odd. By case 3(c) of Theorem\ \ref{theorem:summary}, the tree is finite with 
$\ell=\lceil 7/2\rceil=4$ levels. The second figure is the translated tree $f(n-S_{\ell})$, 
where $S_{\ell}=\frac{b}{2a}=20$. 

\begin{figure}[H]
\begin{tikzpicture}[level distance=60pt, sibling distance=8pt]
	\Tree [.$q$ 
		\edge node[auto=right]{$3q$}; $\boxed{0}$
		\edge node[auto=right,xshift=45, yshift=-10]{$3q+1$};
        [.$\nu_3\geq 2$
			\edge node[auto=right,yshift=-3]{$9q+1$};  $\boxed{2}$
			\edge node[auto=right, xshift=40,yshift=-20]{$9q+4$}; $\boxed{2}$
			\edge node[auto=left,yshift=-3]{$9q+7$};
       [.$\nu_3\geq 4$
				\edge node[auto=right,yshift=-3]{$27q+7$}; 
                 [.$\nu_3\geq 6$
    				\edge node[auto=right,yshift=-10]{$81q+7$}; $\boxed{7}$
    				\edge node[auto=right,scale=0.6,rotate=90,xshift=20,yshift=5]{$81q+34$}; $\boxed{6}$
    				\edge node[auto=left,yshift=-10]{$81q+61$}; $\boxed{6}$
                ]
				\edge node[auto=right,scale=0.75, xshift=5]{$27q+16$}; $\boxed{4}$
				\edge node[auto=left,yshift=-3]{$27q+25$}; $\boxed{4}$
			]
		]
		\edge node[auto=left]{$3q+2$}; $\boxed{0}$
	]
\end{tikzpicture}\\
 \caption{A $3$-adic valuation tree for the polynomial $f(n)=4n^2+160n-587$.}
 \label{Example:FiniteTree}
\end{figure}
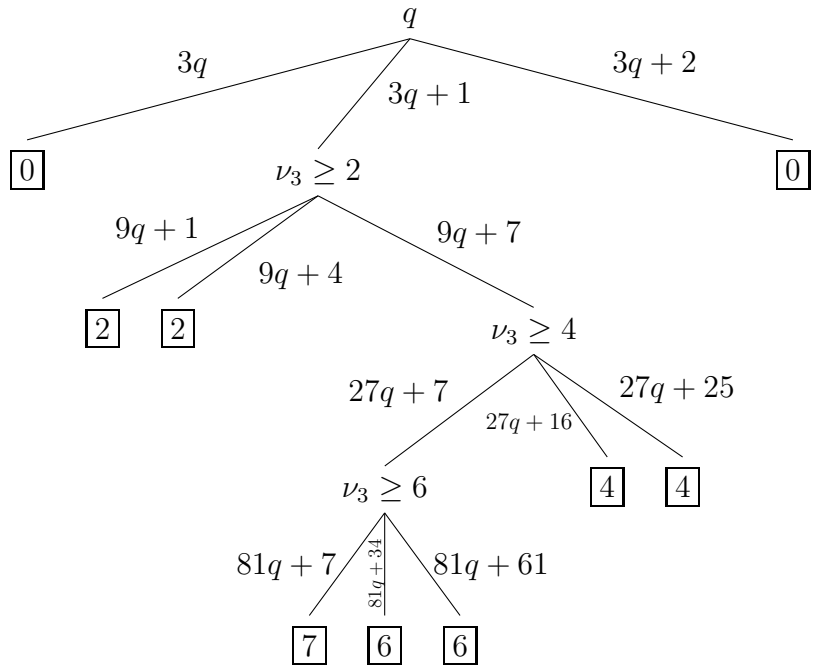

\begin{figure}[H]
\begin{tikzpicture}[level distance=60pt, sibling distance=8pt]
	\Tree [.$q$ 
		\edge node[auto=right]{$3q$}; 
        [.$\nu_3\geq 2$
			\edge node[auto=right]{$9q$}; 
        [.$\nu_3\geq 4$
				\edge node[auto=right,yshift=-3]{$27q$}; 
                [.$\nu_3\geq 6$
    				\edge node[auto=right,yshift=-10]{$81q$}; $\boxed{7}$
    				\edge node[auto=right,scale=0.6,rotate=90,xshift=20,yshift=5]{$81q+27$}; $\boxed{6}$
    				\edge node[auto=left,yshift=-10]{$81q+54$}; $\boxed{6}$
                ]
				\edge node[auto=right,scale=0.75, xshift=3]{$27q+9$}; $\boxed{4}$
				\edge node[auto=left,yshift=-3]{$27q+18$}; $\boxed{4}$                
			]
			\edge node[auto=left, xshift=-40, yshift=-15]{$9q+3$}; $\boxed{2}$
            \edge node[auto=left,yshift=-3]{$9q+6$}; $\boxed{2}$            
		]		
  \edge node[auto=left, xshift=-40, yshift=-15]{$3q+1$}; $\boxed{0}$
        \edge node[auto=left,xshift=3]{$3q+2$};  $\boxed{0}$          
	]
\end{tikzpicture}\\
 \caption{The translated tree: a $3$-adic valuation tree for the polynomial $f(n-20)=4n^2-2187$, used to determine the tree in Figure\ \ref{Example:FiniteTree}.} 
 \label{Example:FiniteTreeTranslation}
\end{figure}
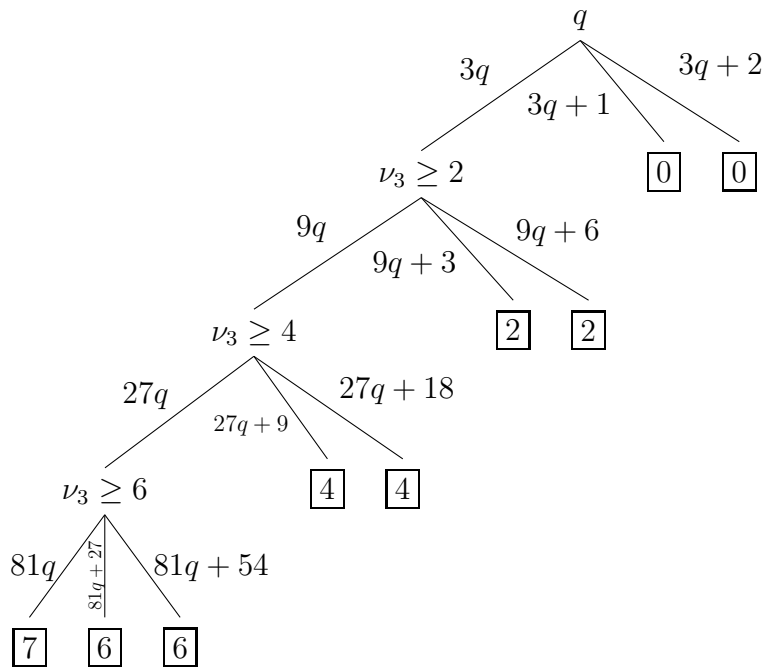

\vspace{0.2in}
 \begin{table}
    \small
    \begin{tabular}{c|cccc ccccc ccc}
$n$&0&1&2&3&4&5&6&7&8&9&10\\ \hline
$f(n)$&$-587$&$-423$&$-251$&$-71$&117&313&517&729&949&1177&1413\\
$\nu_{3}(f(n))$&0&2&0&0&2&0&0&6&0&0&2&\\
\end{tabular}

\vspace{0.2in}
\begin{tabular}{c|cccc ccccc ccc}
    $n$&11&12&13&14&15&16&17&18&19\\ \hline
    $f(n)$&1657&1909&2169&2437&2713&2997&3289&3589&3897\\
    $\nu_{3}(f(n))$&0&0&2&0&0&4&0&0&2\\
    \end{tabular}
    \caption{The first twenty terms of $(\nu_{3}(f(n)))$, generated by $f(n)=4n^2+160n-587$.}
\end{table}

\end{document}